\numberwithin{equation}{section}
\theoremstyle{plain}
\newtheorem{theorem}{Theorem}[section]
\newtheorem{proposition}[theorem]{Proposition}
\newtheorem{lemma}[theorem]{Lemma}
\theoremstyle{definition}
\newtheorem{ejem}[theorem]{Example}
\theoremstyle{remark}
\newtheorem{remark}[theorem]{Remark}
\newcommand{\1}{\textbf{1}}
\newcommand{\A}{{\mathcal A}}
\newcommand{\B}{{\mathcal B}}
\newcommand{\C}{{\mathcal C}}
\newcommand{\D}{{\mathcal D}}
\newcommand{\TY}{{\mathcal {TY}}}
\newcommand\id{\operatorname{id}}
\newcommand\ad{\operatorname{ad}}
\newcommand\Rep{\operatorname{Rep}}
\newcommand\FPdim{\operatorname{FPdim}}
\newcommand\vect{\operatorname{Vec}}
\newcommand\Irr{\operatorname{Irr}}
\newcommand\cd{\operatorname{c.d.}}
\begin{document}
\title[Solvability of a class of  braided fusion categories]{Solvability of a class of braided fusion categories}
\author{Sonia Natale}
\author{Julia Yael Plavnik}
\address{Facultad de Matem\'atica, Astronom\'\i a y F\'\i sica,
Universidad Nacional de C\'ordoba, CIEM -- CONICET, (5000) Ciudad
Universitaria, C\'ordoba, Argentina}
\email{natale@famaf.unc.edu.ar, plavnik@famaf.unc.edu.ar
\newline \indent \emph{URL:}\/ http://www.famaf.unc.edu.ar/$\sim$natale}
\thanks{The research of S. N. was partially supported by CONICET  and Secyt-UNC. The research of J. P. was partially supported by CONICET, ANPCyT  and Secyt-UNC} \subjclass{18D10; 16T05}
\keywords{Fusion category; braided fusion category; solvability}
\date{May 10, 2012}

\begin{abstract} We show that a weakly integral braided fusion category $\C$ such that every simple object of
$\C$ has Frobenius-Perron dimension $\leq 2$ is solvable. In
addition, we prove that such a fusion category  is
group-theoretical in the extreme case where
the universal grading group of $\C$ is trivial.
\end{abstract}

\maketitle

\section{Introduction and main results}
Let $k$ be an algebraically closed field of characteristic zero.
A fusion category over $k$ is a semisimple tensor category over $k$ having finitely many isomorphism
classes of simple objects.
In this paper we consider the problem of giving structural results
of a fusion category $\mathcal C$ under restrictions on the set $\cd
(\C)$ of Frobenius-Perron dimensions of its simple objects.

Results of this type were obtained in the paper \cite{NP}. For
instance, we showed in \cite[Theorem 7.3]{NP} that under the
assumption that $\C$ is braided odd-dimensional and $\cd(\C)
\subseteq \{p^m:\, m \geq 0\}$, where $p$ is a (necessarily odd)
prime number, then $\C$ is solvable. Also, the same is true when
$\C = \Rep H$, where $H$ is a semisimple quasitriangular Hopf
algebra and  $\cd (\mathcal C) = \{1, 2\}$ \cite[Theorem
6.12]{NP}.

Using results of the paper \cite{BN}, we also showed  in
\cite[Theorem 6.4]{NP} that if $\C = \Rep H$, where $H$ is any
semisimple Hopf algebra, and  $\cd(\C) \subseteq \{ 1, 2\}$, then
$\C$ is weakly group-theoretical, and furthermore, it is
group-theoretical if $\C$ coincides with the adjoint subcategory
$\C_{\ad}$.

Our main results are the following theorems. Recall that a fusion
category $\C$ is called \emph{weakly integral} if the
Frobenius-Perron dimension of $\C$ is a natural integer.

\begin{theorem}\label{soluble}
Let $\C$ be a weakly integral braided fusion category such that
$\FPdim X \leq 2$, for all simple object $X$ of $\C$. Then $\C$ is
solvable.
\end{theorem}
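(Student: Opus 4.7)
The plan is to reduce Theorem~\ref{soluble} to the situation handled by the paper's second main theorem (the one announced in the abstract for the case $U(\C) = 1$, in which $\C$ is group-theoretical, and hence solvable). The reduction proceeds in two steps, both resting on the extension theorem of Etingof-Nikshych-Ostrik: if $\C$ is a $G$-extension of $\D$ with $G$ and $\D$ both solvable, then $\C$ is solvable.

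I first reduce to the case where $\C$ is integral. Since $\C$ is weakly integral and $\FPdim X \le 2$ for each simple $X$, the Frobenius-Perron dimensions of simple objects lie in $\{1, \sqrt{2}, 2\}$. By the Gelaki-Nikshych theorem on the canonical faithful grading of a weakly integral fusion category, $\C$ is faithfully graded by an elementary abelian $2$-group $E$ whose trivial component is the maximal integral fusion subcategory $\C_{\rm int}$; in the setting at hand $E$ is trivial or $\mathbb{Z}/2\mathbb{Z}$. The subcategory $\C_{\rm int}$ inherits the braiding and still satisfies $\cd(\C_{\rm int}) \subseteq \{1,2\}$, so the problem reduces to solvability of $\C_{\rm int}$.

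Assuming now that $\C$ is integral, I reduce further to the case $U(\C) = 1$. The adjoint subcategory $\C_{\ad}$ is the trivial component of the universal grading; it is again braided integral with $\cd(\C_{\ad}) \subseteq \{1,2\}$, and for braided $\C$ the universal grading group $U(\C)$ is known to be abelian. Iterating the adjoint construction yields a chain $\C \supseteq \C_{\ad} \supseteq (\C_{\ad})_{\ad} \supseteq \cdots$ which strictly decreases in Frobenius-Perron dimension as long as the universal grading is nontrivial, so it stabilizes at a braided fusion subcategory $\C^{(\infty)}$ with $U(\C^{(\infty)}) = 1$, still satisfying the hypotheses of the theorem. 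Since each link in the chain is an extension by an abelian (and therefore solvable) universal grading group, solvability of $\C$ follows from solvability of $\C^{(\infty)}$, which is provided by the paper's companion theorem.

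I expect that the main obstacle is not either of the two reductions above, which are essentially formal, but rather the proof of the companion theorem for $\C^{(\infty)}$. That result requires a genuine structural analysis of integral braided fusion categories with $\cd \subseteq \{1,2\}$ and trivial universal grading. I anticipate that this will proceed through a study of the M\"uger center of $\C^{(\infty)}$, combined with Deligne's theorem on symmetric fusion categories (and the classical fact that a finite group whose irreducible characters all have degree at most $2$ is solvable), followed by a de-equivariantization argument along a suitable Tannakian subcategory.
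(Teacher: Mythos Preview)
Your two reductions---to the integral case via the $\mathbb Z_2$-grading of a weakly integral category, and then to $\C = \C_{\ad}$ via the upper adjoint series together with \cite[Proposition 4.5(i)]{ENO2}---are correct and coincide with the paper's opening moves. The gap is at the step ``$\C$ is group-theoretical, and hence solvable.'' Group-theoretical does \emph{not} imply solvable in general: $\Rep A_5$ is group-theoretical (Morita equivalent to $\vect_{A_5}$) but not solvable, since solvability is Morita invariant and $A_5$ is simple. What saves you here is that the companion result actually proves more than the abstract announces: Theorem~\ref{morita-ccad} shows $\C$ is Morita equivalent to $\C(A \rtimes \mathbb Z_2, \tilde\omega)$ with $A$ abelian, and this group is metabelian, hence solvable. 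Alternatively, one can invoke \cite{NNW} to write a braided group-theoretical $\C$ as an equivariantization $\D^G$ of a pointed $\D$, so that $\Rep G \subseteq \C$ forces $\cd(G) \subseteq \{1,2\}$ and $G$ is solvable by the classical result you allude to; then $\C = \D^G$ is solvable by \cite[Proposition 4.5(i)]{ENO2}. Either route closes the gap, but the proposal as written does not.

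There is also a structural point: the paper does \emph{not} deduce Theorem~\ref{soluble} from Theorem~\ref{gp-ttic}. It proves Theorem~\ref{soluble} by direct induction on $\FPdim\C$. Once $\C = \C_{\ad}$, the paper uses the Naidu--Rowell dihedral classification (Lemma~\ref{genconad}) to generate $\C$ by subcategories $\D_i$ Grothendieck equivalent to $\Rep D_{n_i}$ with $n_i$ odd, shows via a fusion-rule argument (Lemma~\ref{prod-simples-categorico}) that the $\D_i$ share a common nontrivial invertible $g$, proves $g$ lies in the M\"uger center with trivial twist (Lemma~\ref{lema-dn}), de-equivariantizes by $\langle g\rangle \simeq \mathbb Z_2$ to a braided $\widetilde\C$ of half the dimension still satisfying $\cd(\widetilde\C)\subseteq\{1,2\}$, and applies the inductive hypothesis. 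The proof of Theorem~\ref{morita-ccad} then \emph{reuses} this same construction and pushes further to show $\widetilde\C$ is pointed. So routing through the companion theorem does not bypass the hard step: producing the common Tannakian $\mathbb Z_2$ in $\C'$ from the dihedral generators is required either way, and your anticipated sketch (M\"uger center, Deligne, de-equivariantization) does not name this mechanism or the Naidu--Rowell input on which it rests.
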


Theorem \ref{soluble} extends the   previous result for semisimple
quasitriangular Hopf algebras mentioned above. It implies in
particular that every weakly integral braided fusion category with
Frobenius-Perron dimensions of simple objects at most $2$ is
weakly group-theoretical. This gives some further support to the
conjecture that every weakly integral fusion category is weakly
group-theoretical. See \cite[Question 2]{ENO2}.

It is known that a nilpotent braided  fusion category, which is in addition integral (that is, $\cd(\C) \subseteq \mathbb Z_+$)  is always group-theoretical \cite[Theorem 6.10]{DGNO}. We also show that the same conclusion is true in the opposite extreme case:

\begin{theorem}\label{gp-ttic} Let $\C$ be a weakly integral braided fusion category such that
$\FPdim X \leq 2$, for all simple object $X$ of $\C$.
Suppose that the universal grading group of $\C$ is trivial.
%$\C$ satisfies one of the following conditions:
%\begin{enumerate}\item[(i)] $\C$ is integral and nilpotent, or
%\item[(ii)] $\C = \C_{\ad}$.
%\end{enumerate}
Then $\C$ is group-theoretical.
\end{theorem}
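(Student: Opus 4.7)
The plan is to first show, using the hypothesis $U(\C) = 1$, that $\C$ is actually integral, and then to invoke \cite[Theorem 6.4]{NP} (or its natural extension beyond the semisimple Hopf case) to conclude.

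\textbf{Step 1: $\C$ is integral.} Under our hypotheses every simple object has $\FPdim$ in $\{1, 2, \sqrt{2}\}$, so simple objects split into those of rational dimension and those of irrational dimension. One checks directly from the fusion rule identity $\FPdim(X\otimes Y) = \sum_Z N_{XY}^Z \FPdim(Z)$ that this partition defines a $\mathbb{Z}/2$-grading of $\C$: if $X$ and $Y$ have rational dimension, then $\FPdim(X\otimes Y)$ is rational, and since the $N_{XY}^Z$ are nonnegative integers no simple constituent can have $\FPdim = \sqrt{2}$; the remaining two cases are similar. This canonical grading is dominated by the universal grading, so the triviality of $U(\C)$ forces it to be trivial as well. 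Hence every simple object of $\C$ has integer dimension; in particular $\cd(\C) \subseteq \{1,2\}$, and $\C = \C_{\ad}$.

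\textbf{Step 2: Integral case.} The target is \cite[Theorem 6.4]{NP}, which gives the desired conclusion under the additional assumption that $\C = \Rep H$ for a semisimple Hopf algebra $H$. Since $\C$ is integral, it can be realized as the representation category of a semisimple quasi-triangular quasi-Hopf algebra, so one natural plan is to revisit the arguments of \cite{NP} and the underlying results of \cite{BN}, recast in the language of a braided fusion category, and verify that they go through with associators. Theorem \ref{soluble} supplies the inductive footing: $\C$ is solvable, so by Deligne's theorem its M\"uger center $\C'$ contains a nontrivial Tannakian subcategory $\Rep G$ unless $\C$ is trivial, and de-equivariantizing by $\Rep G$ produces a smaller braided fusion category to which an inductive hypothesis would apply.

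The main obstacle is controlling how the two hypotheses $\cd(\C) \subseteq \{1,2\}$ and $\C = \C_{\ad}$ behave under de-equivariantization (or under a Morita replacement bringing $\C$ into the Hopf-algebraic setting of \cite[Theorem 6.4]{NP}); neither condition is an automatic invariant of these operations. One must therefore show that enough of the structure persists so that the inductive step closes, and identify a base case small enough to handle directly---most naturally, a nilpotent braided integral fusion category, for which group-theoreticity is supplied by \cite[Theorem 6.10]{DGNO} as recalled in the introduction.
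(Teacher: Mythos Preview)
Your Step 1 is fine and agrees with the paper (which cites \cite[Theorem 3.10]{gel-nik} for the same reduction). Step 2, however, is not a proof but a program whose gaps you yourself flag. You propose to rerun \cite[Theorem 6.4]{NP} and the underlying \cite{BN} arguments in the quasi-Hopf setting and to set up an induction by de-equivariantizing by a Tannakian piece of $\C'$, but you then name ``the main obstacle''---that $\C=\C_{\ad}$ need not persist after de-equivariantization---without resolving it. That obstacle is real: $\cd(\tilde\C)\subseteq\{1,2\}$ does survive (\cite[Lemma 7.2]{NP}), but there is no reason $\tilde\C=(\tilde\C)_{\ad}$, so the induction does not close. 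A second gap: solvability plus Deligne does not by itself furnish a nontrivial \emph{Tannakian} subcategory of $\C'$; one must rule out $\C'\simeq\mathrm{sVec}$, which uses the concrete fact that the distinguished invertible $g$ satisfies $\theta_g=1$ (proof of Proposition~\ref{equiv}).

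The paper's argument (Theorem~\ref{morita-ccad}) sidesteps induction entirely. By Lemma~\ref{genconad}, $\C$ is generated by subcategories $\D_i$ Grothendieck-equivalent to $\Rep D_{n_i}$ with each $n_i$ \emph{odd}; all $\D_i$ share a common order-$2$ invertible $g\in\C'$ generating a Tannakian $\Rep\mathbb Z_2$. De-equivariantizing \emph{once} by this $\mathbb Z_2$, each image $\tilde\C_i$ has $\FPdim\tilde\C_i=n_i$ odd, hence is pointed (simple dimensions divide $n_i$, so cannot equal $2$); since the $\tilde\C_i$ generate $\tilde\C$, the de-equivariantization is pointed, and then \cite{tambara}, \cite{nik} show $\C\simeq\tilde\C^{\mathbb Z_2}$ is Morita equivalent to $\C(A\rtimes\mathbb Z_2,\tilde\omega)$. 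The missing idea in your proposal is precisely this one-step parity argument on the odd-order dihedral generators, which delivers pointedness of $\tilde\C$ directly rather than through any inductive scheme.
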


Theorems \ref{soluble} and \ref{gp-ttic} are proved in Section
\ref{pruebas}.  Our proofs rely on the results of Naidu and Rowell
\cite{NaR} for the case where $\mathcal C$ is integral and has a
faithful self-dual simple object of Frobenius-Perron dimension
$2$.

Being group-theoretical, a braided fusion category $\C$ satisfying the assumptions of Theorem \ref{gp-ttic}, has the so called property \textbf{F}, namely,  all asso\-ciated braid group representations on the tensor powers of objects of $\C$ factor over finite groups. See \cite[Corollary 4.4]{ERW}. It is conjectured that every braided weakly integral fusion category does have property \textbf{F} \cite{NaR}. This conjecture has been proved for braided fusion categories $\C$ with $\cd(\C) = \{ 1,
2\}$ such that all objects of $\C$ are self-dual or $\C$ is generated by a self-dual simple object \cite[Corollary 4.3 and Remark 4.4]{NaR}.

The paper is organized as follows. In Section
\ref{preliminaries} we recall the main facts and terminology about
fusion and braided fusion categories used throughout. In Section
\ref{examples} we discuss some families of (integral) examples that appear in
the literature. We also recall in this section the results of the
paper \cite{NaR} related to dihedral group fusion rules that will
be used later. In Section \ref{pruebas} we give the proofs of
Theorems \ref{soluble} and \ref{gp-ttic}.

\section{Preliminaries}\label{preliminaries}
%\subsection{Some Generalities on Fusion Categories}

\subsection{Fusion categories}

%We shall denote by $\cd(\C)$ the set
%\begin{equation*}\cd(\mathcal C) =  \{ \FPdim X|\, X \in \Irr (\C) \},
%\end{equation*}
%where $\FPdim X$ denotes the \emph{Frobenius-Perron dimension} of $X \in \Irr(\C)$.

Let $\C$ be a fusion category. We shall denote by $\Irr(\C)$ the
set of isomorphism classes of simple objects of $\C$ and by
$G(\C)$ the group of isomorphism classes of invertible objects of
$\C$. For an object $X$ of $\C$, we shall indicate by $\C[X]$ the fusion subcategory generated by $X$ and by $G[X]$ the subgroup of $G(\C)$ consisting of invertible objects $g$ such that $g \otimes X \simeq X$.

If $\mathcal D$ is another fusion category, $\C$ and $\D$
are \emph{Morita equivalent} if $\D$ is equivalent to the dual
$\C^*_{\mathcal M}$ with respect to an indecomposable module
category $\mathcal M$. Recall that $\C$ is called \emph{pointed}
if all its simple objects are inver\-tible and it is called
\emph{group-theoretical} if it is Morita equivalent to a pointed
fusion category.

There is a canonical faithful grading $\C = \oplus_{g \in U(\C)}\C_g$, with trivial component $\C_e = \C_{\ad}$, where $\C_{\ad}$ is the \emph{adjoint
subcategory} of $\C$, that is, the fusion subcategory generated by $X \otimes X^*$, where $X$ runs through the
simple objects of $\C$. The group
$U(\C)$ is called the \emph{universal grading group} of $\C$. $\C$ is called nilpotent if the upper central series $\dots \subseteq \C^{(n+1)} \subseteq \C^{(n)} \subseteq \dots \subseteq \C^{(0)} = \C$ converges to $\vect_k$, where $\C^{(i)} : = (\C^{(i-1)})_{\ad}$, $i \geq 1$. See \cite{gel-nik}.

A \emph{weakly group-theoretical} fusion category is a fusion
category $\C$ which is Morita equivalent to a nilpotent fusion
category. If $\C$ is Morita equivalent to a cyclically nilpotent
fusion category, then $\C$ is called \emph{solvable}. We refer the
reader to \cite{ENO, ENO2} for further definitions and facts about
fusion categories.

\subsection{Braided fusion categories}
Let $\C$ be a \emph{braided} fusion category, that is, $\C$ is
equipped with  natural isomorphisms $c_{X,Y} : X \otimes Y \rightarrow Y \otimes X$, $X, Y \in \C$, satisfying the hexagon
axioms.
%see \cite{BK, JS}.
Recall that $\C$ is called \emph{premodular} if it is also spherical, that is, $\C$ has a pivotal structure such that left and right categorical dimensions coincide.
Equivalently, $\C$ is premodular if it is endowed with a compatible ribbon structure \cite{bruguieres, Mu1}.
%This is equivalent to say that a premodular category is a braided fusion category equipped
%with a spherical structure.

We say that the objects $X$ and $Y$ of a braided fusion category $\C$ centralize each other if
$c_{Y,X} c_{X,Y} = \id_{X\otimes Y}$. The \emph{centralizer} $\D'$
of a fusion subcategory $\D \subseteq \C$ is defined to be the full
subcategory of objects of $\C$ that centralize every object of $\D$.
The centralizer $\D'$
results a fusion subcategory of $\C$.

The \emph{Müger (or symmetric) center}  $Z_2(\C)$ of $\C$ is $Z_2(\C) = \C'$; this is a symmetric fusion subcategory of $\C$ whose objects are called central, dege\-nerate or transparent.
A braided fusion category $\C$ is called \emph{non-degenerate} if its Müger center $Z_2(\C)$ is trivial.
A \emph{modular} category is a non-degenerate premodular category $\C$.

\begin{remark}\label{spherical} Recall that a fusion category $\C$ is called pseudo-unitary if $\dim \C = \FPdim \C$,
where $\dim \C$ is the global dimension of $\C$ and $\FPdim \C$ is
the Frobenius-Perron dimension of $\C$. If $\C$ pseudo-unitary
then $\C$ has a canonical spherical structure with respect to
which categorical dimensions of all simple objects coincide with
their Frobenius-Perron dimensions \cite[Proposition 8.23]{ENO}.

In particular, this holds for any weakly integral fusion category,
because it is automatically pseudo-unitary \cite[Proposition
8.24]{ENO}.
Hence every weakly integral non-degenerate fusion
category is canonically a modular category.
\end{remark}

\section{Some families of examples}\label{examples}

\subsection{Examples of fusion categories with Frobenius-Perron dimensions $\leq 2$}

In this subsection we discuss examples of weakly integral fusion categories with
Frobenius-Perron dimensions of simple objects $\leq 2$ that appear in the literature.

\begin{ejem}
Consider  a Hopf algebra
$H$ fitting into an abelian exact sequence:
\begin{equation}\label{exacta} k\rightarrow k^\Gamma \rightarrow H \rightarrow
k\mathbb Z_2 \rightarrow k,
\end{equation} where $\Gamma$ is a finite group.
Let $\C = \Rep H$.  Then $\cd (\C) \subseteq \{1,2\}$ and equality holds if the
associated action of $\mathbb Z_2$ on $\Gamma$ is not trivial.

All these examples are group-theoretical, in view of
\cite[Theorem 1.3]{gp-ttic}.
Observe that, as a consequence of \cite[Theorem
6.4]{BN}, any cosemisimple Hopf algebra $H$ such that $\cd(\C) \subseteq
\{ 1, 2\}$ is group-theoretical if $\C = \C_{\ad}$. See \cite[Theorem 6.4]{NP}.

Non-trivial examples of cosemisimple Hopf algebras fitting into an exact sequence \eqref{exacta}  are given by the Hopf algebras $$\A^*_{4m},  \B^*_{4m} \quad m\geq 2,$$
of dimension $4m$, due to Masuoka \cite{mas-cocycle}.  In these cases, $\Gamma$ is a dihedral group.
\end{ejem}

\begin{ejem}
Let $\C = \TY (G, \chi, \tau)$ be the Tambara-Yamagami category
associated to a finite (necessarily abelian) group $G$, a
symmetric non-degene\-rate bicharacter $\chi : G\times G \rightarrow
k^\times$ and an element $\tau\in k$ satisfying $|G|\tau^2 = 1$
\cite{TY}. This is a fusion category with isomorphism classes of
simple objects parameterized by the set $G\cup\{X\}$, where $X
\notin G$, obeying the fusion rules
\begin{equation}\label{ty}
g \otimes h = gh, \quad g, h\in G,\quad X \otimes X = \oplus_{g\in G} g.
\end{equation}

%a esto lo saqué de NAR.
%Notice that with this fusion rules all but one simple object, $X$,
%are inverti\- ble.

%The associativity constraints are defined via $\chi$. The unit
%constraints are the identity maps. The category $\TY (G, \chi,
%\tau)$ is rigid with $g^* = g^{-1}$ and $X^* = X$ (with obvious
%evaluation and coevaluation maps). It has a canonical spherical
%structure with respect to which categorical and Frobenius-Perron
%dimensions coincide (that means that $\TY (G, \chi, \tau)$ is
%pseudo-unitary).
We have $\cd (\C) = \{1,2\}$ if and only if $G$ is of order $4$.
Therefore, in this case $\FPdim \C = 8$.

If $G\simeq \mathbb Z_4$, there are two possible fusion categories
$\C$. None of them is braided \cite[Theorem 1.2 (1)]{Siehler-braided}.

If $G\simeq \mathbb Z_2 \times \mathbb Z_2$ there are exactly four
classes of Tambara-Yamagami categories with irreducibles degrees
$1$ or $2$, by \cite[Theorem 4.1]{TY}. Three of them are
(equivalent to) the categories of representations of
eight-dimensional Hopf algebras: the dihedral group algebra of
order $8$, the quaternion group algebra, and the Kac-Paljutkin
Hopf algebra $H_8$. The remaining fusion ca\-tegory, which has the
same $\chi$ as $H_8$ but $\tau = -1/2$, is not realized as the
fusion category of representations of a Hopf algebra. Since in
this case $G$ is an elementary abelian $2$-group all of this
categories admit a braiding, by \cite[Theorem 1.2 (1)]{Siehler-braided}.

All the fusion categories in this example are group-theoretical.
In fact, by \cite[Lemma 4.5]{GNN}, for any symmetric
non-degenerate bicharacter $\chi:G\times G \rightarrow
k^{\times}$, $G$ contains a Lagrangian subgroup with respect to
$\chi$. Therefore $\TY (G, \chi, \tau)$ is group-theoretical, by
\cite[Theorem 4.6]{GNN}.

\end{ejem}

\begin{ejem}
Recall that a near-group category is a fusion category with exactly one isomorphism class of non-invertible simple object.
In the notation of \cite{Siehler-braided}, the fusion rules of $\C$ are determined by a pair
$(G, \kappa)$, where $G$ is the group of invertible objects of $\C$ and $\kappa$ is a nonnegative
integer. Letting $\Irr(\C) = G \cup \{X\}$, where $X$ is non-invertible, we have the relation
\begin{equation}
 X\otimes X = \oplus_{g \in G} g \oplus \kappa X.
\end{equation}

Near-group categories with fusion rule $(G, 0)$ for some finite
group $G$ are thus Tambara-Yamagami categories, discussed in the previous example.
Let us consider near-group categories with fusion rule $(G, \kappa)$ for some finite
group $G$ and a positive integer $\kappa$.

We have $\cd (\C) = \{1,2\}$ if and only if $G$ is of order $2$ and $\kappa = 1$, that means $\C$ is of type $(\mathbb Z_2, 1)$.
Therefore, in this case $\FPdim \C = 6$ and since $\kappa > 0$, then $\C$ is group-theoretical, by \cite[Theorem 1.1]{EGO}. By \cite[Theorem 1.5]{Thornton}, there are up to equivalence exactly two non-symmetric braided near-group categories with fusion rule $(\mathbb Z_2, 1)$.
\end{ejem}

\begin{ejem} Examples of a weakly integral braided fusion categories which are not integral and Frobenius-Perron dimensions of simple objects are $\leq 2$ are given by the Ising categories, studied in \cite[Appendix B]{DGNOI}.
In this case, there is a unique non-invertible simple object $X$ with $X^{\otimes 2} = \1 \oplus a$,
where $a$ generates the group of invertible objects, isomorphic to $\mathbb Z_2$ (note that these are also Tambara-Yamagami ca\-tegories). We have here $\cd(\C) = \{ 1, \sqrt 2 \}$ and $\FPdim \C = 4$.
Every braided Ising
category is modular \cite[Corollary B.12]{DGNOI}. 

Other examples come from braided fusion categories with generalized Tambara-Yamagami fusion rules of type $(G, \mathbb Z_2)$, where $G$ is a finite group. See \cite{liptrap}. In these examples, $\C$ is not pointed, the group of invertible objects is $G$, and $\mathbb Z_2 \simeq \Gamma \subseteq G$ is a subgroup such that $X \otimes X^* \simeq \oplus_{h \in \Gamma} h$, for all non-invertible object $X$ of $\C$. Hence we also have $\cd(\C) = \{ 1, \sqrt 2 \}$.

Since they are not integral, these examples are not group-theoretical.
\end{ejem}

\begin{ejem} Let   $\C$ be a braided group-theoretical fusion
category. Then $\C$ is an equivariantization of a pointed fusion
category, that is, $\C \simeq \D^G$, where $\D$ is a pointed
fusion category and $G$ is a finite group acting on $\D$ by tensor
autoequivalences \cite{NNW}. In this case, $\C$ contains the
category $\Rep G$ of finite-dimensional representations of $G$ as
a fusion subcategory.

Suppose that $\cd(\C) = \{1, p\}$, where $p$ is any prime number.
Then also $\cd(G) \subseteq \{1, p\}$.  In particular, the group
$G$ must have a normal abelian $p$-complement; moreover, either
$G$ contains an abelian normal subgroup of index $p$ or the center
$Z(G)$ has index $p^3$. See \cite[Theorems 6.9, 12.11]{isaacs}.
\end{ejem}

\subsection{Fusion rules of dihedral type}\label{fusion_D_n}

Let $D_n$ be the dihedral group of order $2n$, $n\geq 1$. Recall
that $D_n$ has a presentation by generators $t,z$ and relations
$t^2 = 1 = z^n$, $tz = z^{-1}t$.

%The estructure of the group of invertible objects of $\Rep D_n$ depends on $n$. If $n$ is odd, it results isomorphic to $\mathbb Z_2$, that is $G (\Rep(D_n))\simeq\mathbb Z_2$. Otherwise, if $n = 2m$, this group has order $4$ and %holds that $G (\Rep(D_n))\simeq\mathbb Z_2\oplus\mathbb Z_2$.

The following proposition describes the fusion rules of $\Rep D_n$
(\textit{c.f.} \cite{mas-cocycle}).

\begin{proposition}\label{D_n}
\begin{enumerate}
\item Suppose $n$ is odd. Then the isomorphisms classes of simple
objects of $\Rep D_n$ are represented by $2$ invertible objects,
$\textbf{1}$ and $g$, and $r = (n-1)/2$ simple objects $X_1,
\ldots, X_r$, of dimension $2$, such that
    \begin{align*}
    & g\otimes X_i = X_i = X_i\otimes g,  \qquad  \forall i=1, \ldots, r, \\
    & X_i\otimes X_j = \left\{
    \begin{array}{ll}  X_{i+j}\oplus X_{|i-j|}, \quad  & \text{if} \quad i+j \leq r, \\
                       X_{n-(i+j)}\oplus X_{|i-j|}, \quad & \text{if} \quad i+j > r;
    \end{array}
    \right.
    %& X_i\otimes X_j = X_{i+j}\oplus X_{|i-j|} \qquad  \text{if} \quad i+j \leq r, \\
    %& X_i\otimes X_j = X_{n-(i+j)}\oplus X_{|i-j|} \quad & \text{if} \quad i+j > r;
    \end{align*}
where $X_0 = \textbf{1}\oplus g$.

\item Suppose $n$ is even, that is $n = 2m$. Then the isomorphisms
classes of simple objects of $\Rep D_n$ are represented by $4$
invertible objects, $\textbf{1}$, $g$, $h$, $f = gh$, and
$m-1$ simple objects $X_1, \ldots, X_{m-1}$, of dimension $2$,
such that
 \begin{align*}
    & g\otimes X_i =  X_i = X_i\otimes g, \qquad  \forall i=1, \ldots, m-1, \\
    & h\otimes X_i =  X_{m-i} = X_i\otimes h, \qquad  \forall i=1, \ldots, m-1, \\
    %& X_i\otimes X_j = X_{i+j}\oplus X_{|i-j|}, \quad & \text{if} \quad i+j \leq m, \\
    %& X_i\otimes X_j = X_{2m-(i+j)}\oplus X_{|i+j|}, \quad & \text{if} \quad i+j >  m;\\
    & X_i\otimes X_j = \left\{ \begin{array}{ll} X_{i+j}\oplus X_{|i-j|}, \quad  & \text{if} \quad i+j \leq m, \\
                                            X_{2m-(i+j)}\oplus X_{|i+j|}, \quad & \text{if} \quad i+j >
                                            m;
\end{array}
\right.
 \end{align*}
where $X_0 = \textbf{1}\oplus g$ and $X_m = h\oplus f$.
\end{enumerate}
In particular, the group of invertible objects in $\Rep D_n$ is isomorphic to $\mathbb Z_2$ if $n$ is odd, and to $\mathbb Z_2\times\mathbb Z_2$ if $n$ is even.
\end{proposition}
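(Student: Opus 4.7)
The approach is classical character theory applied to the dihedral group; the plan is to first identify the irreducible representations via Clifford theory for the normal cyclic subgroup $Z := \langle z \rangle$ of index $2$, and then read off the tensor product decompositions from character multiplication. First I would describe the $1$-dimensional representations via the abelianization: for $n$ odd, $[D_n,D_n] = Z$ gives exactly two characters $\textbf{1}, g$ (with $g(t) = -1$), while for $n = 2m$ even, $[D_n,D_n] = \langle z^2 \rangle$ yields the four characters $\textbf{1}, g, h, f = gh$, distinguished by their signs on $t$ and on $z$. The remaining irreducibles are obtained by inducing the characters $\chi_k \colon z^j \mapsto \zeta^{jk}$ of $Z$ (with $\zeta$ a primitive $n$-th root of unity) to $D_n$: the induced module $X_k := \Ind_Z^{D_n} \chi_k$ is $2$-dimensional, irreducible precisely when $2k \not\equiv 0 \pmod{n}$, and satisfies $X_k \cong X_{n-k}$ because conjugation by $t$ sends $\chi_k$ to $\chi_{-k}$. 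Counting parameters in the fundamental range gives $r = (n-1)/2$ classes $X_1,\dots,X_r$ in the odd case and $m-1$ classes $X_1,\dots,X_{m-1}$ in the even case, matching the conjugacy class count.

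Next I would record the action of the invertible objects on each $X_k$. Since $g$ takes value $1$ on $Z$ and $-1$ off $Z$, the character of $g \otimes X_k$ agrees with $\chi_{X_k}$ on $Z$ and vanishes on $Zt$, so $g \otimes X_k \cong X_k$. In the even case, $h$ is trivial on $\langle z^2 \rangle$ and nontrivial on $z$, so multiplication of characters combined with $\zeta^m = -1$ identifies $h \otimes X_k$ with $X_{k+m}$; applying $X_{k+m} \cong X_{n-(k+m)} = X_{m-k}$ produces the stated rule $h \otimes X_k \cong X_{m-k}$.

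The bulk of the work is the computation of $X_i \otimes X_j$. The key identity is the trigonometric expansion
\[
\chi_{X_i}(z^\ell)\,\chi_{X_j}(z^\ell) = \zeta^{\ell(i+j)} + \zeta^{-\ell(i+j)} + \zeta^{\ell(i-j)} + \zeta^{-\ell(i-j)},
\]
while both characters vanish on $Zt$. This shows $X_i \otimes X_j$ has the same character as the formal sum $X_{i+j} \oplus X_{|i-j|}$, subject to the conventions $X_0 = \textbf{1} \oplus g$ (which encodes the splitting of the reducible $\Ind \chi_0$) and, in the even case, $X_m = h \oplus f$ (the splitting of $\Ind \chi_m$). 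The case distinctions in the statement then simply reflect the reduction of the index $i+j$ to the fundamental range via the identification $X_k \cong X_{n-k}$: if $i+j$ exceeds $r$ (odd case) or $m$ (even case) we replace it by $n-(i+j)$.

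I expect the conceptual part — classification of irreducibles and the product of characters — to be routine, with the main obstacle being the careful bookkeeping at the boundary cases $i=j$, $i+j = r$ or $m$, and $i+j > n/2$, so that the reducible ``objects'' $X_0$ and $X_m$ are inserted with the correct multiplicities. The final assertion about $G(\Rep D_n)$ is an immediate consequence of the classification of $1$-dimensional representations in the first step.
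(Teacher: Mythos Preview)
Your approach is correct and is the standard derivation of these fusion rules via character theory: classify the linear characters through the abelianization, obtain the $2$-dimensional irreducibles as $\Ind_Z^{D_n}\chi_k$ by Clifford theory for the index-$2$ cyclic normal subgroup, and then read off the tensor decompositions from the identity $(\zeta^{\ell i}+\zeta^{-\ell i})(\zeta^{\ell j}+\zeta^{-\ell j}) = \zeta^{\ell(i+j)}+\zeta^{-\ell(i+j)}+\zeta^{\ell(i-j)}+\zeta^{-\ell(i-j)}$ on $Z$ together with vanishing on the coset $Zt$. The boundary bookkeeping you flag ($X_0 = \textbf{1}\oplus g$, $X_m = h\oplus f$, and the fold $X_k \cong X_{n-k}$) is exactly what produces the case split in the statement.

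There is nothing to compare against, however: the paper does not supply a proof of this proposition. It is stated as a known description of the representation ring of $D_n$, with a parenthetical reference to Masuoka \cite{mas-cocycle}. So your write-up is strictly more than what the paper contains; it simply fills in the classical computation that the authors take for granted.
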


\begin{remark}\label{ndivide4}
Suppose that $4$ divides $n = 2m$. Then $X_{m/2}$ is
fixed under (left and right) multiplication by all invertible objects of $\Rep D_n$.
\end{remark}

Let $\C$ be a fusion category with $\cd (\C) = \{1, 2\}$. Suppose that the Grothendieck ring of $\C$ is commutative
(for example, this is the case if $\C$ is braided). Assume in addition that the following conditions hold:

\begin{enumerate}
\item[(a)] All objects are self-dual, that is $X \simeq X^*$, for every object $X$ of $\C$.

\item[(b)] $\C$ has a faithful simple object.
%(that means that every simple object $Y$ is a subobject of $X^n$  for some $n$).
\end{enumerate}
Then, it is shown in \cite[Theorem 4.2]{NaR} that $\C$ is
Grothendieck equivalent to $\Rep D_n$.  Moreover, $\C$ is
necessarily group-theoretical.

\medbreak It is possible to remove the assumption that all the
objects are self-dual but it is still necessary the condition of
self-duality on the faithful simple object. Namely, suppose that
$\C$ is not self-dual, but satisfies
\begin{enumerate}
\item[(b')] $\C$ has a faithful self-dual simple object.
\end{enumerate}

In this case $\C$ is still group-theoretical and it is
Grothendieck equivalent to $\Rep \widetilde D_n$, $n$ odd. See
\cite[Remark 4.4]{NaR}. Here $\widetilde D_n$ is the generalized quaternion
(binary dihedral) group of order $4n$, that is, the group
presented by generators $a, s$, with relations $a^{2n} = 1$, $s^2
= a^n$, $s^{-1}as = a^{-1}$. (Observe that for $n$ odd,
$\widetilde D_n$ is isomorphic to the semidirect product $\mathbb
Z_n \rtimes \mathbb Z_4$, with respect to the action given by
inversion, considered in \cite{NaR}. For even $n$, $\Rep
\widetilde D_n$ is Grothendieck equivalent to $\Rep D_{2n}$, while
$\mathbb Z_n \rtimes \mathbb Z_4$ has no faithful representation
of degree $2$.)

% fitting in the central extension $1\rightarrow \mathbb Z_2\rightarrow \widetilde D_n\rightarrow D_n\rightarrow 1$.
%\begin{equation*}
%1\rightarrow \mathbb Z_2\rightarrow \widetilde D_n\rightarrow D_n\rightarrow 1.
%\end{equation*}

\begin{lemma}\label{centers} Let $n \geq 2$. Then $(\Rep \widetilde D_{n})_{\ad}
= \Rep D_{n}$. In addition,
\begin{equation*}(\Rep D_{n})_{\ad} = \left\{ \begin{array}{ll} \Rep D_{n/2}, \quad  & \text{if} \quad n \quad \text{is even}, \\
                                            \Rep D_{n}, \quad  & \text{if}\quad n \quad \text{is odd}.
\end{array}
\right.
\end{equation*}
\end{lemma}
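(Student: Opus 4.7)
The plan is to reduce the computation to group theory via the identification $(\Rep G)_{\ad} = \Rep(G/Z(G))$ valid for any finite group $G$. This is a consequence of the Gelaki--Nikshych description of the universal grading on $\Rep G$ \cite{gel-nik}: one has $U(\Rep G) \simeq \widehat{Z(G)}$, with grading determined by the central character, so the trivial component consists precisely of the representations on which $Z(G)$ acts trivially. With this in hand, the lemma becomes equivalent to computing the centers of $\widetilde D_n$ and $D_n$ and identifying the corresponding quotients.

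For $\widetilde D_n = \langle a, s \mid a^{2n}=1,\ s^2 = a^n,\ s^{-1}as = a^{-1}\rangle$, I would first use the conjugation rule $s a^i s^{-1} = a^{-i}$ to see that $a^i$ is central iff $a^{2i}=1$, i.e., $i\in\{0,n\}$, and that no reflection $a^i s$ commutes with $a$ once $n \geq 2$. Hence $Z(\widetilde D_n) = \langle a^n\rangle \simeq \mathbb Z_2$, and the quotient $\widetilde D_n/Z(\widetilde D_n)$ is presented by the images $\bar a, \bar s$ satisfying $\bar a^n = 1$, $\bar s^2 = 1$, $\bar s^{-1}\bar a \bar s = \bar a^{-1}$, so it is $D_n$. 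This gives the first assertion. The analogous analysis applied to $D_n = \langle t, z \mid t^2 = z^n = 1,\ tzt = z^{-1}\rangle$ yields $Z(D_n) = 1$ for $n$ odd (since $z^i = z^{-i}$ then forces $i \equiv 0$ mod $n$, and $z^i t$ is never central for $n\geq 3$), while $Z(D_n) = \langle z^m\rangle$ for $n = 2m$ with $m\geq 2$; in the latter case the quotient is visibly presented as $D_m = D_{n/2}$, and the two cases of the second formula follow.

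No step is really delicate: the main thing to watch is the bookkeeping with the presentations when passing to the quotient group, and the slightly anomalous case $n=2$, where $D_2$ is already abelian and its adjoint subcategory is trivial; this may deserve a separate remark depending on the convention for $\Rep D_1$. Beyond this, the argument is self-contained modulo the cited description of the universal grading of a representation category.
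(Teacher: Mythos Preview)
Your proof is correct and follows exactly the same route as the paper: reduce via the identification $(\Rep G)_{\ad} = \Rep(G/Z(G))$ from \cite{gel-nik}, then compute the centers of $\widetilde D_n$ and $D_n$ by hand and identify the quotients. Your caution about the edge case $n=2$ is well-placed---the paper simply asserts $Z(D_n)=\{1,z^{n/2}\}$ for even $n$ without singling out the abelian case---but otherwise the structure and content of the two arguments are identical.
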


\begin{proof} Recall that when $\C = \Rep G$, where $G$ is a finite group,
then $\C_{\ad} = \Rep G/Z(G)$ \cite{gel-nik}. The first claim
follows from the fact that the center of $\tilde D_n$ equals $\{1,
s^2\} \simeq \mathbb Z_2$. On the other hand, the center $Z(D_n)$
is trivial if $n$ is odd, and equals $\{ 1, z^{n/2}\} \simeq \mathbb
Z_2$ if $n$ is even.
 This implies the second claim and finishes the proof of the lemma.
%First notice that the center of $\mathbb Z_{k_i}\rtimes  \mathbb Z_4$
%depends on the parity of $k_i$ in the following way. Let $a$ be the
%generator of $\mathbb Z_4$ and $b$ the generator of $\mathbb
%Z_{k_i}$. The center $Z(\mathbb Z_{k_i}\rtimes  \mathbb Z_4)$ is
%equals to $\{1, a^2\}$, if $k_i$ is odd, and is equals to $\{1, a^2,
%b^{k_i/2}, a^2b^{k_i/2}\}$ in the even case.
\end{proof}

\section{Proof of the main results}\label{pruebas}

In this section we shall prove Theorems \ref{soluble} and
\ref{gp-ttic}.

\begin{proposition}\label{equiv}
Let $\C$ be a premodular fusion category. Suppose $\C$ has an
invertible object $g$ of order $n$ and a simple object $X$ such
that
\begin{flalign} \label{(1)}& g\otimes X = X, \textrm{ and } & \\
\label{(2)}& g \textrm{ centrali\-zes } X.&
\end{flalign}
Then we have
\begin{enumerate} \item[(i)] $\C$ is an equivariantization by the cyclic group
$\mathbb Z_n$ of a fusion category $\widetilde \C$.
\item[(ii)] If $g \in \C'$, then $\widetilde \C$ is braided.
\end{enumerate}
\end{proposition}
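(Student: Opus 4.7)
The plan is to apply the Drinfeld--Gelaki--Nikshych--Ostrik correspondence (see \cite{DGNOI}) between equivariantizations by a finite group and central Tannakian subcategories: once a Tannakian subcategory $\mathcal{E} \simeq \Rep\mathbb Z_n$ of $\C$ is realized as a central subcategory of the Drinfeld center $Z(\C)$, the de-equivariantization $\widetilde\C := \C_{\mathcal{E}}$ is a fusion category and $\C \simeq \widetilde\C^{\mathbb Z_n}$; if in addition $\mathcal{E}$ is contained in $Z_2(\C)$, the braiding of $\C$ descends to a braiding on $\widetilde\C$.

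The natural candidate for $\mathcal{E}$ is $\C[g]$, the pointed fusion subcategory generated by $g$, equivalent to $\vect_{\mathbb Z_n}$ as a fusion category. Hence the main task for (i) is to show that $\C[g]$ is Tannakian as a braided subcategory of $\C$. For this, I would first use the ribbon identity
\[
\theta_{g\otimes X} = (\theta_g\otimes\theta_X)\,c_{X,g}\,c_{g,X},
\]
valid since $\C$ is premodular (hence ribbon); combined with the identification $g\otimes X = X$ from (1) and the centralization condition (2), this forces $\theta_g = 1$. A straightforward induction via the hexagon axiom shows that $g^i\otimes X = X$ and $c_{X,g^i}c_{g^i,X} = \id$ for every $i\geq 1$, and the same argument then gives $\theta_{g^i} = 1$ for all $i$. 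Combining this with the pointed formula $\theta_{g^i} = \theta_g^{\,i}\,c_{g,g}^{\,i(i-1)}$ yields $c_{g,g}^{\,i(i-1)} = 1$ for every $i$, and together with $c_{g,g^n} = c_{g,\1} = 1$ this should force $c_{g,g} = 1$. Hence $\C[g]\simeq\Rep\mathbb Z_n$ is Tannakian, and via the canonical braided embedding $\C\hookrightarrow Z(\C)$ it becomes a central Tannakian subcategory, so (i) follows from the equivariantization / de-equivariantization dictionary.

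For (ii), if $g\in\C'$ then $\C[g]\subseteq Z_2(\C)$, and under the canonical embedding $\C\hookrightarrow Z(\C)$ the image of $\C[g]$ is centralized by the entire image of $\C$; this is exactly the hypothesis under which the braided version of the de-equivariantization theorem (see \cite{DGNOI}) ensures that the braiding on $\C$ descends to a braiding on $\widetilde\C$.

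The most delicate step is the deduction $c_{g,g} = 1$. The argument above only gives $c_{g,g}^{\,2} = 1$ together with $c_{g,g}^{\,n} = 1$, which is immediate for $n$ odd but leaves open a priori the value $c_{g,g} = -1$ for even $n$. Excluding this residual possibility is the point at which the ribbon structure of $\C$ has to be used carefully: for instance, in a pointed ribbon fusion category the twist of an invertible object coincides with its self-braiding, so the vanishing $\theta_g = 1$ forces $c_{g,g} = 1$, and this is the key technical ingredient to carry through the plan.
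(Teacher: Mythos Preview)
Your overall strategy coincides with the paper's: show that $\C[g]$ is a Tannakian subcategory equivalent to $\Rep\mathbb Z_n$, then invoke the de-equivariantization formalism of \cite{DGNOI}; part (ii) is handled identically, and your computation $\theta_g = 1$ from the balancing axiom is precisely what the paper obtains by citing \cite[Lemma~5.4]{Mu}.

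Where your argument is shaky is the passage from $\theta_g = 1$ to $c_{g,g} = 1$. The assertion that ``in a pointed ribbon fusion category the twist of an invertible object coincides with its self-braiding'' is false as stated: the correct relation is $c_{g,g} = \dim(g)\,\theta_g$, and one can have $\dim(g) = -1$ (take $\mathrm{sVec}$ with the ribbon structure $\theta_g = 1$, where $c_{g,g}=-1$). What rescues you here is that hypothesis \eqref{(1)} forces $\dim(g)\dim(X) = \dim(X)$ with $\dim(X)\neq 0$, hence $\dim(g) = 1$; once this is made explicit your conclusion follows. Your intermediate detour through $\theta_{g^i} = \theta_g^{\,i} c_{g,g}^{\,i(i-1)}$ is also problematic, since that formula presupposes a trivial associator on $\C[g]$, which you asserted (``equivalent to $\vect_{\mathbb Z_n}$ as a fusion category'') but did not justify.

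The paper bypasses all of these computations via a structural observation you missed: condition \eqref{(1)} means $g$ is a summand of $X\otimes X^*$, so $\C[g]\subseteq\C[X]$; combined with \eqref{(2)} this gives $\C[g]\subseteq Z_2(\C[X])\subseteq\C[g]'$, i.e.\ $\C[g]$ is \emph{symmetric} outright. A symmetric pointed category over $\mathbb Z_n$ is then either $\Rep\mathbb Z_n$ or (for even $n$) its super analogue, and the twist condition $\theta_h = 1$ for all $h\in\langle g\rangle$ selects the Tannakian case. This route sidesteps both the associator question and any direct manipulation of $c_{g,g}$.
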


\begin{proof}
 %By hypothesis, $g$ is a non-trivial invertible object of $\C$ such that $g^n =
%\textbf{1}$ (with $n$ the order of $g$) and  $g\otimes X = X$.
Condition \eqref{(1)} ensures the existence of a fiber functor on the fusion category $\C [g]$ generated by
$g$. Then $\C [g]$ is equivalent to
$\Rep \mathbb Z_n$ as fusion categories.

Moreover, they are equivalent as braided fusion categories.
Indeed, \eqref{(1)} implies $\C[g]\subseteq \C[X]$ and therefore
$\C[g]\subseteq Z_2(\C[X])$, by \eqref{(2)}. Hence $\C[g]$ is
symmetric. Then the only possible twists in $\C$ are $\theta_h =
1$ and $\theta_h = -1$ for all $h\in\langle g \rangle$. But
$\theta_h$ is not equal to $-1$ since $h$ centralizes $X$ and
$h\otimes X = X$ \cite[Lemma 5.4]{Mu}. Then $\theta_h = 1$ for all
$h\in\langle g \rangle$. Therefore $\C [g] \simeq \Rep \mathbb
Z_n$ as braided fusion categories, as claimed.

Let $\Gamma = \langle g \rangle \subseteq G(\C)$.  It follows from \cite[Theorem 4.18 (i)]{DGNOI} that the de-equivariantization $\widetilde \C = \C_{\Gamma}$ of $\C$ by
$\Gamma$ is a fusion category and there is a canonical equivalence $\C\simeq {\widetilde \C}^{\Gamma}$ between the category $\C$ and the
$\Gamma$-equivariantization of $\widetilde \C$, which shows (i).

Furthermore, if $g \in  \C'$ then $\widetilde \C$ is braided and the  equivalence
$\C \simeq {\widetilde \C}^{\Gamma}$ is of braided fusion categories \cite{bruguieres, Mu} (see also \cite[Theorem 4.18 (ii)]{DGNOI}).
Thus we get (ii).
This proves the proposition.
\end{proof}

\begin{lemma}\label{generadores}
Let $\C$ be a fusion category with commutative Grothendieck ring. Suppose  that $\C = \C_{\ad}$. If $\D_1,
\ldots, \D_s$ are fusion subcategories that generate $\C$ as a
fusion category, then $\D_1^{(m)}, \ldots, \D_s^{(m)}$ generate
$\C$ as a fusion category, $\forall m\geq 0$.
\end{lemma}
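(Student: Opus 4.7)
The plan is to reduce the statement to the binary identity
\begin{equation*}
(\D \vee \mathcal{E})_{\ad} = \D_{\ad} \vee \mathcal{E}_{\ad},
\end{equation*}
valid for any two fusion subcategories $\D, \mathcal{E}$ of $\C$ (where $\vee$ denotes the fusion subcategory generated by the arguments), and then conclude by a double induction---first on the number $s$ of subcategories, then on the iteration level $m$.

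One inclusion, $\D_{\ad} \vee \mathcal{E}_{\ad} \subseteq (\D \vee \mathcal{E})_{\ad}$, is immediate: every simple $Y \in \D$ (respectively $Z \in \mathcal{E}$) is simple in $\D \vee \mathcal{E}$, so $Y \otimes Y^*$ (respectively $Z \otimes Z^*$) lies in $(\D \vee \mathcal{E})_{\ad}$. For the reverse inclusion I would use that, by commutativity of the Grothendieck ring, every simple $X \in \D \vee \mathcal{E}$ is a subobject of some $Y \otimes Z$ with $Y \in \D, Z \in \mathcal{E}$ simple: any alternating tensor product of simple objects from $\D$ and $\mathcal{E}$ can be rearranged at the Grothendieck level into such a product, and by semisimplicity of $\C$ this rearrangement is in fact an isomorphism of objects. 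Consequently $X \otimes X^*$ is a subobject of $Y \otimes Z \otimes Z^* \otimes Y^*$, which again by commutativity is isomorphic to $(Y \otimes Y^*) \otimes (Z \otimes Z^*) \in \D_{\ad} \vee \mathcal{E}_{\ad}$. Since fusion subcategories are closed under subobjects, $X \otimes X^* \in \D_{\ad} \vee \mathcal{E}_{\ad}$, and such objects generate $(\D \vee \mathcal{E})_{\ad}$.

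With the binary identity established, induction on $s$ gives
\begin{equation*}
(\D_1 \vee \cdots \vee \D_s)_{\ad} = \D_1^{(1)} \vee \cdots \vee \D_s^{(1)}.
\end{equation*}
The hypothesis $\D_1 \vee \cdots \vee \D_s = \C$ together with $\C = \C_{\ad}$ shows that the left-hand side equals $\C$, which settles the case $m = 1$. The general case follows by induction on $m$: assuming the statement for $m-1$, the subcategories $\D_i^{(m-1)}$ generate $\C$, and the $m=1$ case applied to them (using $\C = \C_{\ad}$ once more) yields that $\D_1^{(m)}, \ldots, \D_s^{(m)}$ generate $\C$.

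The main subtle point is the passage from the equality of Grothendieck classes $[Y \otimes Z \otimes Z^* \otimes Y^*] = [(Y \otimes Y^*) \otimes (Z \otimes Z^*)]$ to an honest isomorphism of objects; this is legitimate because $\C$ is semisimple. Without the commutativity hypothesis the factors cannot be shuffled, so the natural containment $X \otimes X^* \subseteq Y \otimes Z \otimes Z^* \otimes Y^*$ places the factors in the wrong order to conclude that the right-hand tensor product lies in $\D_{\ad} \vee \mathcal{E}_{\ad}$, and the argument collapses.
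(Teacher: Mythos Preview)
Your proof is correct and follows essentially the same approach as the paper: both arguments use commutativity of the Grothendieck ring to rearrange $X_{i_1}\otimes\cdots\otimes X_{i_t}\otimes X_{i_t}^*\otimes\cdots\otimes X_{i_1}^*$ into a product of objects $X_{i_l}\otimes X_{i_l}^*$ lying in the respective $(\D_{i_l})_{\ad}$, and then induct on $m$. The only difference is organizational---you first isolate the binary identity $(\D\vee\mathcal{E})_{\ad}=\D_{\ad}\vee\mathcal{E}_{\ad}$ and induct on $s$, whereas the paper handles all $s$ subcategories at once in a single step.
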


\begin{proof} Since $\D_1, \ldots, \D_s$  generate $\C$, then $(\D_1)_{\ad}, \ldots,
(\D_s)_{\ad}$ generate $\C$. In fact, let $X$ be a simple object
of $\C$. There exist simple objects $X_{i_1}, \ldots, X_{i_t}$,
with  $X_{i_l} \in \D_{i_l}$, $1 \leq i_1, \dots, i_t \leq s$,
such that $X$ is a direct summand  of $X_{i_1}\otimes \ldots
\otimes X_{i_t}$. Then $X\otimes X^*$ is a direct summand  of
$$X_{i_1}\otimes \ldots \otimes X_{i_t}\otimes X_{i_t}^*\otimes
\ldots \otimes X_{i_1}^* \simeq (X_{i_1}\otimes X_{i_1}^*)\otimes
\ldots \otimes (X_{i_t}\otimes X_{i_t}^*),$$ where we have used
that $\C$ has a commutative Grothendieck ring.
%then $X_{i_1}\otimes \ldots \otimes X_{i_t}\otimes X_{i_t}^*\otimes \ldots \otimes X_{i_1}^* \simeq X_{i_1}\otimes X_{i_1}^*\otimes \ldots \otimes X_{i_t}\otimes X_{i_t}^*$.
%Therefore $X\otimes X^*$ is a direct summand  of $X_{i_1}\otimes X_{i_1}^*\otimes \ldots \otimes X_{i_t}\otimes X_{i_t}^*$.
Notice that the object in the right hand side belongs to the fusion subcategory generated by $(\D_1)_{\ad}, \ldots,
(\D_s)_{\ad}$.
%$X_{i_1}\otimes X_{i_1}^*, \ldots,  X_{i_t}\otimes X_{i_t}^*$
%are in some of the subcategories $(\D_1)_{\ad}, \ldots, (\D_s)_{\ad}$.
Since $X$ was arbitrary, it follows that  $(\D_1)_{\ad}, \ldots,
(\D_s)_{\ad}$ gene\-rate $\C_{\ad}$. But $\C = \C_{\ad}$ by
assumption, then we have proved that $(\D_1)_{\ad}, \ldots,
(\D_s)_{\ad}$ generate $\C$.  The statement follows from this by
induction on $n$, since $\D_j^{(n)} = (\D_j^{(n-1)})_{\ad}$, for
all $j = 1, \ldots s$, $n\geq 1$.
\end{proof}

\subsection{Braided fusion categories with irreducible degrees $1$ and $2$ }

Throughout this subsection $\C$ is a braided fusion category with
$\cd (\C) = \{1,2\}$. We regard $\C$ as a premodular category with
respect to its canonical spherical structure. See Remark
\ref{spherical}.

\begin{remark}\label{orderG[X]}
Note that $G[X]\neq \textbf{1}$, for all
$X$ such that $\FPdim X = 2$. Moreover, $|G[X]| = 2$ or $4$. In
particular the (abelian) group $G(\C)$ is not trivial.
\end{remark}

\begin{proposition}\label{equi_g} Let $g$ be a non-trivial invertible object such that $g^2 = 1$ and $\theta_g = 1$.
Assume that $g$ generates the Müger center $\C'$ of $\C$ as a
fusion category.
%that last condition means $\C'$ tannakian. Then, $\C$ is modulari\-zable.
Then $\C$ is the equivariantization of a modular fusion category
$\widetilde \C$ by the group $\mathbb Z_2$. Furthermore
$\cd(\widetilde \C) \subseteq \{1,2\}$.
\end{proposition}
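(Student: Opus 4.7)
My plan is to combine the hypotheses on $g$ with the standard de-equivariantization theorem and Brugui\`eres--M\"uger modularization. First, I would identify the M\"uger center as Tannakian: since $g^2 = \mathbf{1}$, $\theta_g = 1$, and $g$ generates $\C'$ as a fusion category, the subcategory $\C[g] = \C'$ is a symmetric fusion subcategory of Frobenius-Perron dimension $2$ whose non-trivial generator has trivial twist; this rules out the super-Tannakian alternative $\operatorname{sVec}$, so $\C[g] \simeq \Rep \mathbb Z_2$ as symmetric fusion categories.

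Next, I would apply \cite[Theorem 4.18]{DGNOI} to form the de-equivariantization $\widetilde \C := \C_{\langle g \rangle}$. Since $\langle g \rangle \subseteq \C'$ is Tannakian, $\widetilde \C$ is a braided fusion category and there is a canonical equivalence of braided fusion categories $\C \simeq \widetilde \C^{\mathbb Z_2}$. Alternatively, after exhibiting a simple object $X$ of Frobenius-Perron dimension $2$ with $g \in G[X]$ (available since $g \in \C'$ permutes the simples of Frobenius-Perron dimension $2$ by left tensoring and must preserve at least one, as any simple $X$ of Frobenius-Perron dimension $2$ satisfies $G[X] \neq \mathbf 1$ by Remark \ref{orderG[X]}), the same conclusion follows from Proposition \ref{equiv}.

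To upgrade ``braided'' to ``modular'', I would invoke Brugui\`eres--M\"uger modularization \cite{bruguieres, Mu}: de-equivariantizing a premodular category by its full Tannakian M\"uger center yields a non-degenerate braided fusion category. Since here $\mathcal E = \C' = \langle g \rangle$, the resulting $\widetilde \C$ has trivial M\"uger center, and combined with its canonical spherical structure (Remark \ref{spherical}) this makes it modular.

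Finally, I would bound the Frobenius-Perron dimensions in $\widetilde \C$ using the fact that the de-equivariantization functor $F \colon \C \to \widetilde \C$ is a dominant tensor functor preserving Frobenius-Perron dimension. Any simple $Y$ of $\widetilde \C$ is a direct summand of $F(X)$ for some simple $X$ of $\C$, so $1 \leq \FPdim Y \leq \FPdim F(X) = \FPdim X \leq 2$, forcing $\FPdim Y \in \{1, 2\}$. The main anticipated obstacle is the modularity step: it does not follow from Proposition \ref{equiv} alone but relies on the cited modularization result for premodular categories whose M\"uger center is Tannakian.
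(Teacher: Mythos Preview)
Your proposal follows the same route as the paper: recognize $\C' = \C[g] \simeq \Rep\mathbb Z_2$ as Tannakian, invoke Brugui\`eres--M\"uger modularization (equivalently \cite[Theorem 4.18]{DGNOI}) to obtain a modular $\widetilde\C$ with $\C \simeq \widetilde\C^{\mathbb Z_2}$, and then bound $\cd(\widetilde\C)$. Two points deserve correction.

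First, in your last step the inequality $1 \leq \FPdim Y \leq 2$ does \emph{not} by itself force $\FPdim Y \in \{1,2\}$; you have not established that $\widetilde\C$ is integral, so values like $\sqrt 2$ are not yet excluded. The paper handles this by citing \cite[Proof of Proposition 6.2]{ENO2} and \cite[Lemma 7.2]{NP}, whose content is the orbit description of simples under $\mathbb Z_2$-equivariantization: for each simple $Y$ of $\widetilde\C$ there is a simple $X$ of $\C$ with $\FPdim X = |O|\cdot \FPdim Y$ and $|O| \in \{1,2\}$, so $\FPdim Y \in \{\tfrac12,1,2\}\cap[1,\infty) = \{1,2\}$. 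Your dominance argument gives the upper bound but needs this refinement (or a separate integrality argument) to finish.

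Second, the justification for your ``alternative'' via Proposition~\ref{equiv} is faulty: the fact that $G[X]\neq\mathbf 1$ for every $2$-dimensional simple $X$ does not imply that the \emph{particular} element $g$ lies in some $G[X]$; other order-$2$ invertibles could witness $G[X]\neq\mathbf 1$ while $g$ acts freely. This alternative is in any case unnecessary, since your primary appeal to \cite[Theorem 4.18]{DGNOI} already applies once $\C[g]\subseteq\C'$ is identified as Tannakian.
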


\begin{proof}
By assumption $\C'\simeq \Rep \mathbb Z_2$ is tannakian. Then the
de-equivarianti\-zation $\widetilde \C$ of $\C$ by $\C'$ is a
modular category and there is an action of $\mathbb Z_2$ on
$\widetilde \C$ such that $\C \simeq \widetilde \C ^{\mathbb Z_2}$
\cite{bruguieres, Mu}.
%non-degenerate braided fusion category, by \cite[Remark
%2.3]{ENO2}. But the equivariantization and the
%de-equivariantization are canonically inverse to each other, there
%are canonical equivalences $\widetilde \C ^{\mathbb Z_2}\simeq
%\C$, by \cite[Proposition 4.19]{DGNOI}.
Since $\cd(\widetilde \C ^{\mathbb Z_2}) = \cd(\C) = \{1,2\}$,
then $ \cd(\widetilde \C) \subseteq \{1,2\}$, by \cite[Proof of Proposition
6.2]{ENO2}, \cite[Lemma 7.2]{NP}.
%Note that since $\widetilde \C$ is an integral non-degenerate
%category, then by \cite[Proposition 8.24]{ENO}, it is
%pseudounitary, which implies that it is canonically a modular
%category.
\end{proof}

\begin{lemma}\label{noad}
Suppose that $\C\neq \C_{\ad}$ and $\C_{\ad}$ is solvable. Then $\C$ is
solvable.
\end{lemma}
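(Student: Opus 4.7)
The plan is to exploit the canonical universal grading $\C = \bigoplus_{g \in U(\C)} \C_g$, which exhibits $\C$ as a $U(\C)$-extension of $\C_{\ad}$, and then to combine the assumed solvability of $\C_{\ad}$ with solvability of the grading group $U(\C)$ to conclude. Since $\C \neq \C_{\ad}$, the group $U(\C)$ is non-trivial, so this extension is genuine.

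The first step is to verify that $U(\C)$ is abelian, and here braidedness is essential. For any $g, h \in U(\C)$, take nonzero simple objects $X \in \C_g$ and $Y \in \C_h$. Then $X \otimes Y$ is a nonzero object of $\C_{gh}$, and the braiding isomorphism $c_{X,Y} : X \otimes Y \to Y \otimes X$ exhibits the same object in $\C_{hg}$. Since the universal grading is faithful, distinct homogeneous components are disjoint, so this forces $\C_{gh} = \C_{hg}$ and hence $gh = hg$. Thus $U(\C)$ is a finite abelian group, in particular solvable.

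To finish, I invoke the closure property that a $G$-graded extension of a solvable fusion category by a solvable finite group $G$ is itself solvable, cf.\ \cite[Proposition 4.5(i)]{ENO2}. Applied to the extension $\C_{\ad} \subseteq \C$ with grading group $U(\C)$, both of which are now known to be solvable, this immediately yields that $\C$ is solvable.

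The only mildly subtle point is the abelianness of $U(\C)$; beyond this observation, the proof is a direct appeal to the established extension theorem for the class of solvable fusion categories. I would expect the short braiding argument to be the only piece requiring any care.
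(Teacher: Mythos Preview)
Your proof is correct and follows essentially the same approach as the paper: both use that $\C$ is a $U(\C)$-extension of $\C_{\ad}$, observe that $U(\C)$ is abelian (the paper cites \cite[Theorem~6.2]{gel-nik} while you supply the short braiding argument directly), and then invoke \cite[Proposition~4.5(i)]{ENO2}. The hypothesis $\C \neq \C_{\ad}$ is in fact not needed for the argument to go through, as you may have noticed.
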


\begin{proof}
%By induction in $\FPdim(\C)$. Notice that $\cd (\C _{\ad})
%\subseteq \{1, 2\}$.

%If $\cd(\C_{\ad}) = \{1\}$ then $\C$ is nilpotent. Moreover $\C$ is
%solvable, because it is braided \cite{gel-nik}.

%We may assume $\cd(\C_{\ad}) = \{1, 2\}$. If $\C_{\ad} \neq \C$ we
%have $\FPdim \C_{\ad} < \FPdim \C$. By inductive hypothesis,
%$\C_{\ad}$ is solvable.
Since $\C$ is braided, its universal grading group $U(\C)$ is
abelian \cite[Theorem 6.2]{gel-nik}. The category $\C$ is a
$U(\C)$-extension of $\C_{\ad}$ and an extension of a solvable
category by a solvable group is again solvable \cite[Proposition
4.5 (i)]{ENO2}. Then $\C$ is solvable, as claimed.
\end{proof}

\begin{lemma}\label{ad}
Assume $\C = \C_{\ad}$. Then $\FPdim \C' \geq 2$.
\end{lemma}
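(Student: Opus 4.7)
The plan is to argue by contradiction: suppose $\FPdim \C' = 1$, that is, $\C' = Z_2(\C)$ is trivial, so $\C$ is non-degenerate. Since every simple object has $\FPdim \in \{1,2\}$, the category $\C$ is integral and in particular weakly integral, hence pseudo-unitary; by Remark \ref{spherical}, $\C$ is canonically a modular category. This puts the full strength of Müger's theorem at our disposal.

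I would then bring in two ingredients. First, by Remark \ref{orderG[X]}, the group of invertible objects $G(\C)$ is non-trivial, so the pointed fusion subcategory $\langle G(\C)\rangle$ generated by the invertibles satisfies $\FPdim \langle G(\C)\rangle = |G(\C)| \geq 2$. Second, a standard computation shows that every invertible object of $\C$ centralizes every object of the form $X \otimes X^*$: if $g$ is invertible and $X$ simple, the squared braiding yields scalars $\lambda_{X,g}$ and $\lambda_{X^*,g}$ with $\lambda_{X,g}\lambda_{X^*,g} = 1$, so $g$ centralizes $X \otimes X^*$. Since such objects generate $\C_{\ad}$, this gives $\C_{\ad} \subseteq \langle G(\C)\rangle'$. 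The hypothesis $\C = \C_{\ad}$ therefore forces $\langle G(\C)\rangle' = \C$.

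Finally, Müger's formula for the modular category $\C$ reads
\begin{equation*}
\FPdim \langle G(\C)\rangle \cdot \FPdim \langle G(\C)\rangle' = \FPdim \C.
\end{equation*}
Substituting $\langle G(\C)\rangle' = \C$ gives $\FPdim \langle G(\C)\rangle = 1$, i.e.\ $G(\C) = 1$, contradicting Remark \ref{orderG[X]}. Hence $\C' \neq \vect_k$, and since the smallest non-trivial fusion subcategory of $\C$ has Frobenius-Perron dimension at least $2$, we conclude $\FPdim \C' \geq 2$.

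There is no single hard step here; the main point is to recognize that $\C = \C_{\ad}$ together with $G(\C) \neq 1$ is incompatible with non-degeneracy through the Müger decomposition, once pseudo-unitarity has been used to upgrade $\C$ to a modular category.
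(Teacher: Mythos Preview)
Your argument is correct and reaches the same contradiction as the paper, but the path is slightly different. The paper's proof is a one-line appeal to \cite[Theorem 6.2]{gel-nik}: in a modular category one has $U(\C)\simeq \widehat{G(\C)}$, so if $G(\C)\neq 1$ (Remark \ref{orderG[X]}) then $U(\C)\neq 1$ and hence $\C_{\ad}\subsetneq\C$. You instead unpack the underlying mechanism: you show directly that $\C_{pt}$ centralizes $\C_{\ad}$ (your computation that $\lambda_{X,g}\lambda_{X^*,g}=1$ is exactly the standard argument for this), and then apply M\"uger's dimension formula $\FPdim\D\cdot\FPdim\D'=\FPdim\C$ to force $|G(\C)|=1$. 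Your route is a bit more self-contained, while the paper's is shorter by delegating to a known structural result; both rest on the same circle of ideas relating $\C_{pt}$, $\C_{\ad}$, and centralizers in modular categories.
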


\begin{proof}
Suppose on the contrary that $\FPdim \C' = 1$, that is, $\C$ is
modular. Then, by \cite[Theorem 6.2]{gel-nik}, $U(\C)\simeq
\widehat{G(\C)} \simeq G(\C)$. By Remark \ref{orderG[X]}, $\C_{\ad}\subsetneq \C$, against the
assumption. Hence $\FPdim \C'\geq 2$, as claimed.
\end{proof}

\begin{lemma}\label{lema-dn} Suppose $\C$ is generated by a simple object $X$ such
that $X\simeq X^*$ and $\FPdim X = 2$. Then we have
\begin{enumerate}
\item[(i)] $\C$ is not modular.
\end{enumerate}
Assume $\C = \C_{\ad}$. Then we have in addition
\begin{enumerate}
\item[(ii)] There is a group isomorphism $G(\C)\simeq \mathbb
Z_2$.
\item[(iii)] $G(\C)\subseteq \C'$.
\end{enumerate}

\end{lemma}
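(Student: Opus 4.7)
The plan is to reduce to the Naidu--Rowell classification recalled in Section \ref{fusion_D_n}. Since $X$ is faithful (as it generates $\C$) and self-dual with $\FPdim X = 2$, and $\cd(\C) = \{1,2\}$, the results of \cite{NaR} imply that $\C$ is Grothendieck equivalent to one of three possibilities: $\Rep D_n$ with $n$ odd, $\Rep D_n$ with $n$ even, or $\Rep \widetilde D_n$ with $n$ odd. In each case, Proposition \ref{D_n} and Lemma \ref{centers} allow us to read off both $|G(\C)|$ and $|U(\C)| = \FPdim \C / \FPdim \C_{\ad}$, yielding the pairs $(|G(\C)|, |U(\C)|) = (2,1)$, $(4,2)$, and $(4,2)$, respectively. (The computation of $|G(\widetilde D_n)| = 4$ for $n$ odd is a short calculation with the defining relations, showing that the abelianization of $\widetilde D_n$ is cyclic of order $4$ generated by the image of $s$.)

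For (i), I argue by contradiction: if $\C$ were modular, then by \cite[Theorem 6.2]{gel-nik} we would have $U(\C) \simeq \widehat{G(\C)}$, so in particular $|U(\C)| = |G(\C)|$. None of the three pairs above satisfies this equality, so $\C$ cannot be modular.

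For (ii), under the additional assumption $\C = \C_{\ad}$, the same case analysis combined with Lemma \ref{centers} immediately discards the last two possibilities (whose adjoint subcategories are strictly smaller), leaving only $\C$ Grothendieck equivalent to $\Rep D_n$ with $n$ odd, in which case $G(\C) \simeq \mathbb Z_2$.

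For (iii), I would prove the stronger general fact, independent of the classification, that every invertible object of a braided fusion category centralizes the adjoint subcategory. For any simple object $Y$ and invertible $g \in G(\C)$, the double braiding $c_{Y,g}c_{g,Y}$ acts on $g \otimes Y$ by a scalar $\lambda_Y(g) \in k^\times$; naturality of the braiding with respect to the evaluation $\mathrm{ev}_Y : Y^* \otimes Y \to \mathbf{1}$ forces $\lambda_Y(g)\,\lambda_{Y^*}(g) = 1$, so $\lambda_{Y \otimes Y^*}(g) = 1$ and $g$ centralizes $Y \otimes Y^*$. Since the objects $Y \otimes Y^*$ generate $\C_{\ad}$, we obtain $G(\C) \subseteq (\C_{\ad})'$, and the hypothesis $\C = \C_{\ad}$ then yields $G(\C) \subseteq \C'$. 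The main obstacle lies in parts (i) and (ii), and consists in matching the three Grothendieck models correctly with their invertibles and adjoint data via Proposition \ref{D_n} and Lemma \ref{centers}; part (iii) is then an immediate character computation.
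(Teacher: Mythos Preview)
Your treatment of parts (i) and (ii) matches the paper's: both reduce via \cite{NaR} to the three Grothendieck models $\Rep D_n$ ($n$ odd or even) and $\Rep \widetilde D_n$ ($n$ odd), read off $|G(\C)|$ and $|U(\C)|$ from Proposition~\ref{D_n} and Lemma~\ref{centers}, and invoke \cite[Theorem 6.2]{gel-nik} to rule out modularity.

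Your argument for (iii), however, is genuinely different and more conceptual. The paper argues ad hoc: by (i) the M\"uger center $\C'$ is nontrivial, hence (since $\cd(\C')\subseteq\{1,2\}$, Remark~\ref{orderG[X]}) $G(\C')\neq\1$; but $G(\C')\subseteq G(\C)\simeq\mathbb Z_2$ by (ii), forcing $G(\C')=G(\C)$. You instead prove the general fact that in \emph{any} braided fusion category $G(\C)\subseteq(\C_{\ad})'$, which immediately gives (iii) once $\C=\C_{\ad}$. Your route is cleaner and does not rely on (i), (ii), or the classification at all; the paper's route is shorter given that (i) and (ii) are already in hand. One presentational point: the step ``$\lambda_Y(g)\lambda_{Y^*}(g)=1$, so $\lambda_{Y\otimes Y^*}(g)=1$'' tacitly uses the hexagon axiom to conclude that the double braiding $c_{Y\otimes Y^*,g}c_{g,Y\otimes Y^*}$ equals $\lambda_Y(g)\lambda_{Y^*}(g)\cdot\id$ (not merely that the scalar on the $\1$-summand is $1$); you should make that explicit, since $Y\otimes Y^*$ is not simple.
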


\begin{proof}
By \cite[Theorem 4.2; Remark 4.4]{NaR}, $\C$ is Grothendieck
equivalent to $\Rep D_n$ or $\Rep \widetilde D_{2n+1}$, for some $n\geq
1$.
Since the universal grading group is a Grothendieck invariant, then in
the first case $U(\C)$ is isomorphic to $\mathbb Z_2$ if $n$ is
even and is trivial if $n$ is odd. But $G(\C)$, which is also a
Grothendieck invariant, is isomorphic to $\mathbb Z_2 \times
\mathbb Z_2$ if $n$ is even and is isomorphic to $\mathbb Z_2$ if
$n$ is odd, by Proposition \ref{fusion_D_n}. Then $U(\C)$ is not
isomorphic to $\widehat{G(\C)}$, for any $n$. Therefore $\C$ is
not modular, by \cite[Theorem 6.2]{gel-nik}. Similarly, if $\C$ is
Grothendieck equivalent to $\Rep \widetilde D_{2n+1}$, we have $U(\C) \simeq \mathbb Z_2$ and $G(\C) \simeq
\mathbb Z_4$. Hence $\C$ is not modular in this
case neither. This shows (i).

Notice that the assumption $\C = \C_{\ad}$ implies that $\C$ is
Grothendieck equivalent to $\Rep D_n$, for some $n$ odd. Then
 (ii)  follows immediately from the fusion rules of $\Rep D_n$, with $n$ odd (see Proposition
 \ref{fusion_D_n}). Since, by (i), $\C'$ is not trivial, then $G(\C') \neq
 \1$, because $\cd(\C') \subseteq \{ 1, 2\}$ (\textit{c.f.} Remark \ref{orderG[X]}). By part (i), $G(\C') =
 G(\C)$ and (iii) follows.
\end{proof}

\begin{remark} \label{d_n_impar}If $\C$ is a fusion category as in Lemma \ref{lema-dn},  then the assumption $\C =
\C_{\ad}$ is equivalent to saying that $\C$ is Grothendieck equivalent
to $\Rep D_n$, for some $n \geq 1$ \emph{odd}.
\end{remark}

\begin{lemma}\label{genconad}  Suppose that $\C = \C_{\ad}$.
Then $\C$ is generated by fusion subca\-tegories $\D_1, \dots,
\D_s$, $s \geq 1$, where $\D_i$ is Grothendieck equivalent to
$\Rep D_{n_i}$ and $n_i$ is an odd natural number, for all $i = 1,
\dots, s$.
%Then $\C = \C[X_1, \ldots, X_s]$, where $X_1, \ldots, X_s$ are
%simple objects of $\C$ such that $\FPdim X_i = 2$, $X_i \simeq
%X_i^*$ and $\C[X_i]_{\ad} = \C[X_i]$, for all $i = 1, \ldots, s$.
\end{lemma}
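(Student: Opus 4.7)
The plan is to produce the generating family $\D_1,\dots,\D_s$ by taking iterated adjoints of the fusion subcategories $\C[X\otimes X^*]$, where $X$ ranges over the two-dimensional simple objects of $\C$. Since $\C=\C_{\ad}$, the subcategories $\D_X := \C[X\otimes X^*]$, indexed by simples $X$ with $\FPdim X = 2$ (the invertible $X$ contributing only $\1$), generate $\C$ as a fusion category. By Lemma \ref{generadores}, their iterated adjoints $\D_X^{(m)}$ also generate $\C$ for every $m\geq 0$.

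The first task is to analyze $\D_X$. The object $X\otimes X^*$ is self-dual of Frobenius-Perron dimension $4$ and contains $\1$, so the condition $\cd(\C)=\{1,2\}$ forces exactly one of the following: (A) $X\otimes X^*$ is a sum of four invertibles forming the subgroup $G[X]\leq G(\C)$, in which case $\D_X$ is pointed; or (B) $X\otimes X^* \simeq \1\oplus g\oplus Y$, with $|G[X]|=2$, $g^2=\1$ (forced by self-duality together with the uniqueness of decomposition), and $Y$ a two-dimensional simple satisfying $Y\simeq Y^*$. In case B, tensoring this decomposition with $g$ and comparing to $g\otimes(X\otimes X^*)=X\otimes X^*$ yields $g\otimes Y\simeq Y$; hence $g$ is a summand of $Y\otimes Y^*$, so $g\in\C[Y]$ and therefore $\D_X=\C[Y]$. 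Since $Y$ is then a faithful self-dual simple of $\D_X$ of Frobenius-Perron dimension $2$, \cite[Theorem 4.2 and Remark 4.4]{NaR} imply that $\D_X$ is Grothendieck equivalent either to $\Rep D_n$ for some $n$ or to $\Rep \widetilde D_{n}$ for some odd $n$.

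The second task is to iterate the adjoint. In case A, since $\D_X$ is pointed with abelian group of invertibles (recall that $G(\C)$ is abelian because $\C$ is braided), one has $\D_X^{(m)}=\vect$ for all $m\geq 1$. In case B, Lemma \ref{centers} shows that iterating $(-)_{\ad}$ on $\Rep D_n$ divides $n$ by $2$ so long as $n$ is even, stabilizing at $\Rep D_{n'}$ where $n'$ is the odd part of $n$ (or further collapsing to $\vect$ when $n$ is a power of $2$); while $\Rep \widetilde D_{2k+1}$ already stabilizes after one step at $\Rep D_{2k+1}$, $2k+1$ odd. Choosing $m$ large enough that every $\D_X^{(m)}$ is stable, each nontrivial $\D_X^{(m)}$ becomes Grothendieck equivalent to $\Rep D_{n_X}$ with $n_X$ odd. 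Discarding the trivial subcategories, which contribute nothing to the generation of $\C$, then yields the desired family $\D_1,\ldots,\D_s$; the existence of at least one nontrivial $\D_i$ is forced by the fact that $\C\neq\vect$.

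I expect the main obstacle to be the identification $\D_X=\C[Y]$ in case B---that is, showing the invertible summand $g$ of $X\otimes X^*$ already lies in $\C[Y]$, which is precisely what allows \cite[Theorem 4.2 and Remark 4.4]{NaR} to be applied to $\D_X$. The remaining manipulations of iterated adjoints via Lemmas \ref{generadores} and \ref{centers} are then essentially bookkeeping.
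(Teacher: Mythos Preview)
Your proposal is correct and follows essentially the same approach as the paper's proof: both start from the generators $X\otimes X^*$ (equivalently, pass from $\C[X_i]$ to $(\C[X_i])_{\ad}$), reduce via Lemma~\ref{generadores} to the case $|G[X]|=2$ so that the generator is a self-dual $2$-dimensional simple, invoke \cite[Theorem 4.2, Remark 4.4]{NaR} to identify the resulting subcategory with $\Rep D_n$ or $\Rep\widetilde D_n$, and then iterate Lemma~\ref{centers} together with Lemma~\ref{generadores} to force $n$ odd. Your explicit verification that $g\in\C[Y]$ (hence $\D_X=\C[Y]$) is a detail the paper leaves implicit in the phrase ``the above reductions allow us to assume that $\D_i=\C[X_i]$ with $X_i\simeq X_i^*$'', but the substance is identical.
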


\begin{proof} Let $\C = \C[X_1, \ldots, X_s]$ for some  simple objects $X_1, \ldots,
X_s$. Let $\D_i = \C[X_i]$ be the fusion subcategory generated by $X_i$, $i = 1, \ldots, s$.
By Lemma \ref{generadores}, $(\D_1)_{\ad}, \ldots, (\D_s)_{\ad}$ generate $\C$ as a fusion category.
Hence, it is enough to consider only those simple objects $X_i$ whose Frobenius-Perron dimension equals $2$ (otherwise, $\FPdim X_i = 1$ and $X_i\otimes X_i^* \simeq \1$).

Moreover, iterating the application of Lemma \ref{generadores}, we may further assume that $|G[X_i]| = 2$, for all $i = 1,  \dots, s$. Thus we have a decomposition
$X_i \otimes X_i^* \simeq \1 \oplus g_i \oplus X_i'$, where $G[X_i] = \{ \1, g_i\}$ and $X_i'$ is a self-dual simple object of Frobenius-Perron dimension $2$.
Since $X_i\otimes X_i^*$ generates $(\D_i)_{\ad}$, the above reductions allow us to assume that $\D_i = \C[X_i]$ with $X_i$ simple objects
of $\C$ such that $\FPdim X_i = 2$ and $X_i \simeq X_i^*$,
$\forall i = 1, \ldots, s$.

%Following the
%lines of the proof of Lemma \ref{generadores} we can assume that
%$\FPdim X_i = 2$ and $X_i \simeq X_i^*$, $\forall i = 1, \ldots,
%s$.  Therefore, $X_i\otimes X_i^*$, $\forall i
%= 1, \ldots, s$, generate $\C$. Then, the simple subobjects of
%$X_i\otimes X_i^*$, $\forall i = 1, \ldots, s$, generate $\C$. All
%this objects are selfduals and it is enough to consider the ones
%of Frobenius-Perron dimension equals $2$, by Lemma
%\ref{generadores}.

We claim that we can choose the $X_i$'s in such a way
that $(\D_i)_{\ad}\simeq \D_i$. By \cite[Theorem 4.2; Remark
4.4]{NaR},  $\D_i$ is Grothendieck equivalent to $\Rep D_{n_i}$ or
to $\Rep \widetilde D_{2n_i+1}$.
%for some $k_i\geq 1$, where conjugation by the generator of $\mathbb Z_4$ acts by inversion on $\mathbb Z_k$.
%, for all $i = 1, \ldots, s$.
Iterating the application of Lemma \ref{generadores} and using Lemma \ref{centers}, we obtain that
% Remark \ref{d_n_impar}
$\C = \C[\D_1, \ldots, \D_s]$, with $\D_j$ a fusion subcategory of
$\C$ Grothendieck equivalent to $\Rep D_{n_j}$, $n_j$ odd,
for all $j = 1, \ldots, s$, as we wanted.
\end{proof}

%\ojo me parece que no hace falta ponerlo
%
%\begin{lemma}\label{prod-simples}\cite[Lemma 6.1]{BN} Let $H$ be a semisimple Hopf algebra and let $\chi, \chi'$ be
%irreducible characters of $H$. Then the following are equivalent:
%\emph{(i)} The product $\chi \chi'$ is irreducible.
%
%\emph{(ii)} For all irreducible character $\lambda \neq 1$,
%$m(\lambda, \chi \chi^*) = 0$ or $m(\lambda, \chi'\chi'^*) = 0$.
%
%Here $m(\lambda, \chi \chi^*) = 0$, $m(\lambda, \chi'\chi'^*)$,
%denote the multiplicity of $\lambda$ in $\chi \chi^*$ and in
%$\chi'\chi'^*$, respectively.
%\end{lemma}
%
%In other words, let $X, Y$ be simple objects of $\C$, then
%$X\otimes Y^*$ is not simple if and only if $\C[X]_{\ad}\cap
%\C[Y]_{\ad}$ is not trivial.

\subsection{Proof of Theorems \ref{soluble} and \ref{gp-ttic}} Let $\C$ be a weakly integral fusion category.
It follows from \cite[Theorem 3.10]{gel-nik} that either $\C$ is
integral, or  $\C$ is a $\mathbb Z_2$-extension of a fusion
subcategory $\D$. In particular, if $\C = \C_{\ad}$, then $\C$ is
necessarily integral.

\begin{lemma}\label{prod-simples-categorico}
Let $\C$ be fusion category and let $X, X'$ be simple objects of
$\C$. Then the following are equivalent:
\begin{enumerate} \item[(i)] The tensor product
$X^*\otimes X'$ is simple.

\item[(ii)] For every simple object $Y \neq \1$ of $\C$, either $m(Y, X\otimes X^*)
= 0$ or $m(Y, X'\otimes X'^*) = 0$. \end{enumerate}

%Here $m(Y, X\otimes X^*) = 0$, $m(Y, X'\otimes (X')^*)$, denote
%the multiplicity of $Y$ in $X\otimes X^*$ and in $X'\otimes
%(X')^*$, respectively.

In particular, if
$X^*\otimes X'$ is not simple, then $\C[X]_{\ad} \cap
\C[X']_{\ad}$ is not trivial.
\end{lemma}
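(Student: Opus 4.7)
My plan is to reduce both directions of the equivalence (and the final clause) to a single dimension identity.

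The starting point is the canonical isomorphism
\[
\End(X^{*} \otimes X') \;\cong\; \Hom(X \otimes X^{*},\, X' \otimes X'^{*}),
\]
which I would obtain from two applications of the rigidity adjunction $\Hom(A \otimes B,C) \cong \Hom(A, C \otimes B^{*})$: first I move $X'$ across the $\Hom$, then $X^{*}$. Semisimplicity of $\C$ then yields
\[
\dim \End(X^{*} \otimes X') \;=\; \sum_{Y \in \Irr(\C)} m(Y, X \otimes X^{*})\, m(Y, X' \otimes X'^{*}),
\]
because $\dim \Hom(Y,Y')$ equals $1$ when $Y \cong Y'$ and $0$ otherwise.

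With this identity in hand the equivalence is immediate. The summand for $Y = \1$ equals $m(\1, X \otimes X^{*})\, m(\1, X' \otimes X'^{*}) = 1$ since $X$ and $X'$ are simple, and every remaining summand is a non-negative integer. Hence $X^{*} \otimes X'$ is simple, equivalently $\dim \End(X^{*} \otimes X') = 1$, if and only if $m(Y, X \otimes X^{*})\, m(Y, X' \otimes X'^{*}) = 0$ for every simple $Y \neq \1$, which is condition (ii).

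For the last assertion, negating (i) forces, via the same identity, the existence of a simple $Y \neq \1$ that is simultaneously a subobject of $X \otimes X^{*}$ and of $X' \otimes X'^{*}$; such a $Y$ automatically lies in $\C[X]_{\ad} \cap \C[X']_{\ad}$, showing this intersection is nontrivial. The only delicate step is the adjunction identification in the first display; everything after it is a linear-algebra bookkeeping exercise.
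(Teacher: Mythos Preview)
Your argument is correct and is exactly the standard Frobenius-reciprocity computation; the paper itself does not spell this out but simply cites \cite[Lemma 6.1]{BN} and remarks that the proof there carries over verbatim to arbitrary fusion categories. So you have supplied what the paper only references.

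One small bookkeeping remark: the two applications of $\Hom(A\otimes B,C)\cong\Hom(A,C\otimes B^{*})$ that you describe literally yield $\Hom(\1,\,X^{*}\otimes X'\otimes X'^{*}\otimes X)$ rather than $\Hom(X\otimes X^{*},\,X'\otimes X'^{*})$. This is harmless, since $X\otimes X^{*}$ is self-dual (so $\dim\Hom(X\otimes X^{*},X'\otimes X'^{*})=\dim\Hom(X'\otimes X'^{*},X\otimes X^{*})$) and the multiplicity of $\1$ is invariant under cyclic permutation of tensor factors; either observation recovers your dimension identity $\dim\End(X^{*}\otimes X')=\sum_{Y}m(Y,X\otimes X^{*})\,m(Y,X'\otimes X'^{*})$. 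With that in hand, the equivalence (i)$\Leftrightarrow$(ii) and the final clause follow exactly as you wrote.
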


\begin{proof} The equivalence between (i) and (ii) is proved in \cite[Lemma 6.1]{BN} in the case where $\C$ is the category of (co)representations of a semisimple Hopf algebra.
Note that the proof \textit{loc. cit.} works in this more general context as well.
%Let $Z = X\otimes (X')^*$. Then $Z$ is irreducible if and only if
%$m(\1, Z\otimes Z^*) = 1$. On the other hand, $$Z\otimes Z^* =
%X\otimes (X')^* \otimes X'\otimes X^* = X^*\otimes X \oplus
%\oplus_{Y\neq \1} m(Y, X'(X')^*)X^*\otimes Y\otimes X.$$
%Therefore, $m(\1, Z\otimes Z^*) = 1$ if and only if for all $Y
%\neq \1$, with $m(Y, X'\otimes (X')^*)> 0$, we have $m(\1,
%X^*\otimes Y\otimes X) = 0$ or equivalently, $m(Y, X\otimes X^*) =
%0$.
\end{proof}

\begin{proof}[Proof of Theorem \ref{soluble}]
The proof is by induction on $\FPdim \C$. As pointed out at the
beginning of this subsection, if $\C$ is not integral, then it is
a $\mathbb Z_2$-extension of a fusion subcategory $\D$. Since $\D$
also satisfies the assumptions of the theorem, then $\D$ is
solvable, by induction. Hence $\C$ is solvable as well.

We may thus assume that $\C$ is integral. Therefore $\cd(\C) =
\{1, 2 \}$ and the results of the previous subsection apply. By
Lemma \ref{noad}, we may assume that $\C = \C_{\ad}$. Then it
follows from Lemma \ref{genconad} that $\C = \C[\D_1, \ldots,
\D_s]$, with $\D_j$ Grothendieck equivalent to $\Rep D_{n_j}$,
$n_j$ odd, $\forall j = 1, \ldots, s$.

By Lemma \ref{lema-dn}, $G(\D_j) = \{\textbf{1}, g_j\}$, $\forall
j = 1, \ldots, s$. We claim that $g_i = g_j$ $\forall 1\leq i, j
\leq s$. Indeed, let $\D_j = \C[X^{(j)}]$, where $X^{(j)} =
X_1^{(j)}$ in the notation of Proposition \ref{D_n}. Then we have
$(X^{(j)})^{\otimes 2} = \textbf{1}\oplus g_j \oplus X_2^{(j)}$.
Fix $1\leq i, j \leq s$. Since $\C$ has no simple objects of
Frobenius-Perron dimension $4$ then $g_i = g_j$ or
$X_2^{(j)}\simeq X_2^{(i)}$, by Lemma
\ref{prod-simples-categorico}. In the first case we are done. In
the second case, we note that $\{1, g_j\} = G[X_2^{(j)}] =
G[X_2^{(i)}] = \{1, g_i\}$. Then $g_j = g_i$, as claimed. Let $g = g_j = g_i$.

By Lemma \ref{lema-dn},  $g\in \D_i'$, for all $i = 1, \ldots, s$.
Since $\D_i$, $1\leq i \leq s$, generate $\C$ then $g \in \C'$. It
follows from Theorem \ref{equiv} (ii) that $\C$ is the
equivariantization by $\mathbb Z_2$ of a braided fusion category
$\widetilde \C$. In particular, $\FPdim \widetilde \C = \FPdim \C
/ 2$ and $\cd (\widetilde \C) \subseteq \{1,2\}$, by \cite[Proof of
Proposition 6.2 (1)]{ENO2}, \cite[Lemma 7.2]{NP}. By inductive
hypothesis, $\widetilde \C$ is solvable. Then $\C$, being the
equivariantization of a solvable fusion category by a solvable
group is itself solvable \cite[Proposition 4.5 (i)]{ENO2}.
\end{proof}

\begin{theorem}\label{morita-ccad} Let $\C$ be a weakly integral braided fusion category that $\FPdim X \leq 2$ for all simple object $X$ of $\C$.
Assume in addition that $\C = \C_{\ad}$. Then $\C$ is tensor
Morita equivalent to a pointed fusion category $\C(A \rtimes \mathbb Z_2,
\tilde \omega)$, where $A$ is an abelian group endowed with an action of $\mathbb Z_2$ by group automorphisms, and $\tilde
\omega$ is a certain $3$-cocycle on the semidirect product $A \rtimes \mathbb Z_2$. \end{theorem}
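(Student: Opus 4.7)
The plan is to realize $\C$ as a $\mathbb Z_2$-equivariantization of a pointed fusion category, and then to invoke the standard Morita equivalence between an equivariantization $\D^G$ and the associated $G$-crossed product $\D\rtimes G$.

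I would begin by recovering the structure used in the proof of Theorem \ref{soluble}. Under the hypotheses of the theorem, $\C$ is integral with $\cd(\C)=\{1,2\}$; Lemma \ref{genconad} furnishes generating fusion subcategories $\D_1,\dots,\D_s$ Grothendieck equivalent to $\Rep D_{n_j}$ with $n_j$ odd, and, by Lemma \ref{lema-dn}(iii), they share a common invertible object $g$ of order $2$ lying in the M\"uger center $\C'$. Applying Proposition \ref{equiv}(ii) to $g$ produces a braided equivalence
\[
\C \simeq \widetilde\C^{\mathbb Z_2},
\]
where $\widetilde\C$ is a braided fusion category with $\cd(\widetilde\C)\subseteq\{1,2\}$ and $\FPdim\widetilde\C = \FPdim\C/2$, and the $\mathbb Z_2$-action is the one coming from the de-equivariantization by $\langle g\rangle$.

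The heart of the argument, and the step I expect to require the most care, is to show by induction on $\FPdim\C$ that $\widetilde\C$ is in fact pointed, so that $\widetilde\C\simeq\vect_A^\omega$ for some finite abelian group $A$ (abelian since $\widetilde\C$ is braided) and some $\omega\in H^3(A,k^\times)$. Base cases with small $\FPdim\C$ (where $\C$ is already forced to be pointed) are immediate. The main subtlety is that $\widetilde\C$ need not satisfy $\widetilde\C=\widetilde\C_{\ad}$, so the inductive hypothesis cannot be applied to $\widetilde\C$ directly. My plan is to descend first to the adjoint subcategory $\widetilde\C_{\ad}$, which satisfies the hypotheses with strictly smaller Frobenius-Perron dimension and is hence pointed by induction, and then to reconstruct $\widetilde\C$ as a $U(\widetilde\C)$-extension of $\widetilde\C_{\ad}$, ruling out the appearance of simple objects of Frobenius-Perron dimension $2$ in the non-trivial graded components by following them through the equivariantization $\C=\widetilde\C^{\mathbb Z_2}$ and comparing with the constraint $\cd(\C)=\{1,2\}$ coming from the hypotheses on $\C$.

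Once $\widetilde\C\simeq\vect_A^\omega$ has been identified, the $\mathbb Z_2$-action on $\widetilde\C$ restricts to an action of $\mathbb Z_2$ on $A$ by group automorphisms, and the standard Morita equivalence between an equivariantization and its crossed product yields that $\C$ is Morita equivalent to $\vect_A^\omega\rtimes\mathbb Z_2 = \C(A\rtimes\mathbb Z_2,\tilde\omega)$, where $\tilde\omega\in H^3(A\rtimes\mathbb Z_2,k^\times)$ is the $3$-cocycle built from $\omega$ together with the cocycle data of the $\mathbb Z_2$-action. This gives the desired conclusion.
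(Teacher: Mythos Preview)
Your setup through the identification $\C\simeq\widetilde\C^{\mathbb Z_2}$ and your final Morita step agree with the paper. The difficulty is in your argument that $\widetilde\C$ is pointed, where there is a genuine gap.

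First, the proposed induction is not well-posed. The inductive hypothesis is the theorem itself, whose conclusion is \emph{Morita equivalence} to a pointed category, not pointedness; so ``$\widetilde\C_{\ad}$ \dots\ is hence pointed by induction'' does not follow. Moreover, $\widetilde\C_{\ad}$ need not satisfy $(\widetilde\C_{\ad})_{\ad}=\widetilde\C_{\ad}$, so the hypothesis $\C=\C_{\ad}$ is not available for it. Second, even granting that $\widetilde\C_{\ad}$ is pointed, your plan to rule out $2$-dimensional simples in the nontrivial graded pieces by ``following them through the equivariantization and comparing with $\cd(\C)=\{1,2\}$'' does not work as stated: a simple $Y\in\widetilde\C$ with $\FPdim Y=2$ that is fixed by the $\mathbb Z_2$-action produces simples in $\widetilde\C^{\mathbb Z_2}$ of Frobenius--Perron dimension $2$, which is perfectly compatible with $\cd(\C)=\{1,2\}$. (Tambara--Yamagami categories already show that pointedness of the adjoint subcategory does not force pointedness of the whole category.)

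The paper bypasses induction entirely with a direct argument you have all the ingredients for. The de-equivariantization functor $F\colon\C\to\widetilde\C$ fits into an exact sequence $\Rep\mathbb Z_2\to\C\to\widetilde\C$, and restricting to each $\D_i$ gives exact sequences $\Rep\mathbb Z_2\to\D_i\to\widetilde\C_i$, where $\widetilde\C_i$ is the essential image $F(\D_i)$. Exactness forces $\FPdim\widetilde\C_i=\FPdim\D_i/2=n_i$, which is odd. Since $\widetilde\C_i$ is integral braided with $\cd(\widetilde\C_i)\subseteq\{1,2\}$, the divisibility theorem \cite[Theorem~2.11]{ENO2} forces every simple object of $\widetilde\C_i$ to be invertible. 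The $\widetilde\C_i$ generate $\widetilde\C$ (because the $\D_i$ generate $\C$ and $F$ is surjective), hence $\widetilde\C$ is pointed. This is the missing mechanism; the oddness of the $n_i$ from Lemma~\ref{genconad} is precisely what makes it go through.
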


\begin{proof} The assumption $\C = \C_{\ad}$ implies that $\C$ is
integral. Hence we may assume that $\cd(\C) = \{ 1, 2\}$. By Lemma
\ref{genconad}, $\C$ is generated by fusion subcategories $\D_1,
\dots, \D_s$, $s \geq 1$, where $\D_i$ is Grothendieck equivalent
to $\Rep D_{n_i}$ and $n_i$ is an odd natural number, for all $i =
1, \dots, s$. Furthermore, as in the proof of Theorem
\ref{soluble}, the assumption that $\C = \C_{\ad}$ implies that
$G(\D_i) = \{ \1, g\}$, for all $1\leq i \leq s$, and $\C[g]
\simeq \Rep \mathbb Z_2$ is a tannakian subcategory of the M\"
uger center $\C'$. So that $\C \simeq \tilde \C^{\mathbb Z_2}$ is
an equivariantization of a braided fusion category $\tilde \C$.

Equivariantization under a group action gives rise to exact
sequences of fusion categories \cite[Subsection
5.3]{tensor-exact}. In our situation we have an exact sequence
of braided tensor functors
\begin{equation}\label{sec-c}\Rep \mathbb Z_2 \to \C \overset{F}\to \tilde \C.\end{equation} In addition,
since $\C[g] \subseteq \D_i$,  then \eqref{sec-c} induces by
restriction an exact sequence
\begin{equation}\label{sec-di}\Rep \mathbb Z_2 \to \D_i \to \tilde \C_i,\end{equation}
for all $i = 1, \dots, s$, where $\tilde \C_i$ is the essential
image of $\D_i$ in $\tilde \C$ under the functor $F$. Hence
$\tilde \C_i$ is a fusion subcategory of $\tilde \C$, for all $i$,
and moreover $\tilde \C_1, \dots, \tilde \C_s$ generate $\tilde
\C$ as a fusion category.
Note in addition that $\cd(\tilde \C), \cd(\tilde \C_i) \subseteq
\{ 1, 2\}$, for all $i = 1, \dots, s$.
On the other hand, exactness of the sequence
\eqref{sec-di} implies that $2n_i = \FPdim \D_i = 2 \FPdim \tilde
\C_i$ \cite[Proposition 4.10]{tensor-exact}. Hence $\FPdim \tilde
\C_i = n_i$ is an odd natural number.

Since $\tilde \C_i$ is an integral braided fusion category, then
the Frobenius-Perron dimension of every simple object of $\tilde
\C_i$ divides the Frobenius-Perron dimension of $\tilde \C_i$
\cite[Theorem 2.11]{ENO2}. Thus we get that $\FPdim Y = 1$, for
all $Y \in \Irr (\tilde \C_i)$. That is, $\tilde \C_i$ is a
pointed braided fusion category, for all $i = 1, \dots, s$. Since
$\tilde \C_1, \dots, \tilde \C_s$ generate $\tilde \C$ as a fusion
category, then $\tilde \C$ is also pointed. Therefore $\tilde \C
\simeq \C(A, \omega)$ as fusion categories, where $A$ is an
abelian group and $\omega \in H^3(A, k^{\times})$.

Group actions on pointed categories were classified by
Tambara \cite{tambara}. In view of \cite[Theorem 4.1]{tambara} and \cite[Proposition 3.2]{nik},
the fusion category $\C \simeq \tilde \C^{\mathbb Z_2}$ is tensor
Morita equivalent to a pointed category $\C(A \rtimes \mathbb Z_2,
\tilde \omega)$, where the semidirect product $A \rtimes \mathbb
Z_2$ is with respect of the induced action of $\mathbb Z_2$ on the
group $A$ of invertible objects of $\tilde \C$, and $\tilde
\omega$ is a certain $3$-cocycle on $A \rtimes \mathbb Z_2$.
\end{proof}

\begin{proof}[Proof of Theorem \ref{gp-ttic}.]
%Suppose first that $\C$ is integral and nilpotent.
%Since $\C$ is braided, then $\C$ is group-theoretical, in view of
%\cite[Theorem 6.10]{DGNO}.
%Suppose next that (ii) holds.
The proof is an immediate consequence of Theorem
\ref{morita-ccad}.
\end{proof}

\begin{remark} Let $\C$ be a braided fusion category such that  $\cd(\C) = \{ 1, 2
\}$. Suppose that $\C$ is nilpotent. By \cite[Theorem 1.1]{DGNO}
$\C$ admits a unique decomposition (up to the order of factors)
into a tensor product $\C_1 \boxtimes \dots \boxtimes \C_m$, where
$\C_i$ are braided fusion categories of Frobenius-Perron dimension
$p_i^{m_i}$, for some pairwise distinct prime numbers $p_1, \dots,
p_m$.
Then $\C_i$ is an integral braided fusion category, for all $i =
1, \dots, m$, and by \cite[Theorem 2.11]{ENO2}, we get that $\C_i$
is pointed whenever $p_i > 2$. Hence $\C \simeq \C_1 \boxtimes
\mathcal B$ as braided fusion categories, where $\C_1$ is a
braided fusion category of Frobenius-Perron dimension $2^m$ such
that $\cd(\C_1) = \{ 1, 2 \}$, and $\mathcal B$ is a pointed
braided fusion category. \end{remark}

\bibliographystyle{amsalpha}

\end{document}